\newtheorem{theorem}{Theorem}
\newtheorem{lemma}[theorem]{Lemma}
\newtheorem{proposition}[theorem]{Proposition}
\newtheorem{corollary}[theorem]{Corollary}
\theoremstyle{remark}
\newtheorem{remark}{Remark}
\newtheorem{example}[remark]{Example}
\theoremstyle{definition}
\newcommand{\trace}{\mathrm{trace}}
\newcommand{\rank}{\mathrm{rank}}
\newcommand{\range}{\operatorname{Im}}
\newcommand{\nullspace}{\operatorname{Ker}}
\newcommand{\R}{\mathbb{R}}
\newcommand{\lb}{\overline{\lambda}}
\newcommand{\zb}{\overline{z}}
\renewcommand{\vec}[1]{\boldsymbol{#1}}
\newcommand{\vectwo}[2]{ \left[\!\!
			     \begin{array}{c}
                             #1\\
			     #2 
                             \end{array} \!\! \right]}
\newcommand{\twotwomat}[4]{\left( \begin{array}{cc}
                                   #1 & #2\\#3 & #4
                                  \end{array}
\right) }
\newcommand{\mub}{\overline{\mu}}
\newcommand{\Yb}{\overline{Y}}
\newcommand{\lambdamin}{\lambda_{\textrm{min}}}
\title{A Class of Semidefinite Programs with rank-one solutions}
\author{Guillaume Sagnol \\
       {\small Inria Saclay -- \^Ile-de-France \&  Centre de Math\'ematiques Appliqu\'ees (CMAP),}\\
       {\small \'Ecole Polytechnique, France.} \\
       {\small guillaume.sagnol@inria.fr}
}
\begin{document}

\maketitle

\begin{abstract}
We show that a class of semidefinite programs (SDP) admits a solution that is a positive semidefinite
matrix of rank at most $r$, where $r$ is the rank of the matrix involved in the objective function of the SDP.
The optimization problems of this class are semidefinite packing problems, 
which are the SDP analogs to vector packing problems.
Of particular interest is the case in which our result guarantees the existence of a solution
of rank one: we show that the computation of this solution actually reduces to a 
Second Order Cone Program (SOCP). 
We point out an application in statistics, in the optimal design of experiments. 
\end{abstract}
\paragraph{Keywords}
SDP, Semidefinite Packing Problem, rank~$1$-solution, Low-rank solutions, SOCP, Optimal Experimental Design, Multiresponse experiments.

\section{Introduction}
In this paper, we study \emph{semidefinite packing problems}. The latter, which are the semidefinite programming (SDP) analogs to the packing problems in linear programming, can be written as:
\begin{align} \tag{P}
 \max &\quad \langle C,X \rangle \label{SDPPacking}\\
 \textrm{s.t.} &\quad \langle M_i,X \rangle \leq b_i, \qquad i \in [l],\nonumber\\
 & \quad X \succeq 0, \nonumber
\end{align}
where $C \succeq 0$, and $M_i \succeq 0,\ i\in[l]$. The notation $X\succeq 0$ indicates that $X$ belongs to the
set $\mathbb{S}_n^+$ of $n \times n$ symmetric positive semidefinite matrices. Similarly, $X\succ0$ stands for $X\in\mathbb{S}_n^{++}$,
the set of $n \times n$ symmetric positive definite matrices. 
The space of $n \times n$ symmetric matrices $\mathbb{S}_n$ is equipped with the inner product $\langle A,B \rangle = \mathrm{trace}(A^TB)$. We also make
use of the standard notation $[l]:=\{1,\ldots,l\}$, and we use boldface letters to denote vectors. We denote
the nullspace (resp.\ the range) of a matrix~$A$ by $\nullspace\ A$ (resp.\ $\range\ A$).

Semidefinite packing problems were introduced by Iyengar, Phillips and Stein~\cite{IPS05}. They showed
that these arise in many applications such as relaxations of combinatorial optimization problems or maximum variance unfolding, and gave an algorithm
to compute approximate solutions, which is faster than the commonly used interior point methods.

Our main result is that when the matrix $C$ is of rank $r$, Problem~\eqref{SDPPacking} has a solution
that is of rank at most $r$ (Theorem~\ref{theo:SDPSol1}). In particular, when $r=1$, the optimal SDP variable $X$ can
be factorized as $\vec{x}\vec{x^{T}}$, and we show that finding $\vec{x}$
reduces to a Second-Order Cone Program (SOCP)
which is computationally more tractable than the initial SDP. 
We present this result and some applications in Section~2. Then, we
extend our result to a wider class of semidefinite programs (Theorems~\ref{theo:SDPSol1Asy}
and~\ref{theo:SDPSol1Ext}),
in which not all the constraints are of packing type. The proofs of the results of
Section~\ref{sec:mainresult} are given in Section~\ref{sec:proof}. Theorems~\ref{theo:SDPSol1Asy}
and~\ref{theo:SDPSol1Ext} are proved in appendix.

\paragraph{Related work}

Solutions of small rank of semidefinite programs
have been extensively studied over the past years.
Barvinok~\cite{Bar95} and Pataki~\cite{Pat98} discovered independently that any SDP with $l$ constraints
has a solution $X^*$ whose rank is at most
$$r^* = \left\lfloor \frac{\sqrt{8l+1}-1}{2} \right\rfloor,$$
where $\lfloor \cdot \rfloor$ denotes the integer part.
This was one of the motivations of Burer and Monteiro for developing the SDPLR solver~\cite{BM03},
which searches a solution of the SDP in the form $X=RR^T$, where $R$ is a $n \times r^*$ matrix.
The resulting problem is non-convex, and so the augmented Lagrangian algorithm proposed in~\cite{BM03} is not guaranteed to
converge to a global optimum. However, it performs remarkably well in practice, and some conditions
which ensure that the returned solution is an optimum of the SDP are provided in~\cite{BM05}.
Our result shows that for a semidefinite packing problem in which the matrix $C$ has rank $r$,
one can force the matrix $R$ to be of size $n \times r$ (rather than  $n \times r^*$),
which can lead to considerable gains in computation time when $r$ is small.
\par
\vspace{11pt}

We point out that the ratio between the optimal value of Problem~\eqref{SDPPacking} and the value of its best solution of rank one 
has been studied by Nemirovski, Roos, and Terlaky~\cite{NRT99}. They show
that the value $v^*$ of the SDP and the value $v_1^*$ of its best rank-one solution satisfy:
\begin{equation}
 v^* \geq v_1^* \geq \frac{1}{2 \ln(2 l\mu)} v^*, \qquad \textrm{where}\ \mu=\min(l,\max_{i\in[l]}\ \rank\ {M_i}). \label{gap}
\end{equation}
This ratio can be considerably reduced in particular configurations, but to
the best of our knowledge, the fact that the gap in~\eqref{gap} vanishes when the matrix $C$ in the objective function is of rank $1$
is new, except in the particular case in which every $M_i$ is of rank $1$, too~\cite{Rich08}.

\section{Main result and consequences} 

In this section, we state the main result of this article and point out an application to statistics.
We also discuss the significance of our result for combinatorial optimization problems (the
hypothesis on the rank of the matrix $C$ appears to be very restrictive).
The results of this section are proved in Section~\ref{sec:proof}.

\subsection{The main result} \label{sec:mainresult}
We start with an algebraic characterization of the semidefinite packing problems that
are feasible and bounded.

\begin{theorem}
\label{theo:feasibility-Boundness}
Problem~\eqref{SDPPacking} is feasible if and only if every $b_i$ is nonnegative. Moreover if 
Problem~\eqref{SDPPacking}
is feasible, then this problem is bounded if and only if the range of $C$ is included in the range of
$\sum_i M_i$.
\end{theorem}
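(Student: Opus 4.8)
The plan is to treat the two assertions separately, and to prove each via a pair of implications resting on two elementary facts about positive semidefinite matrices: that $\langle A,B\rangle\ge 0$ whenever $A,B\succeq 0$, and that for $A\succeq 0$ one has $u^{T}Au=0\iff Au=0$.

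For the feasibility equivalence, the easy direction is that if every $b_i$ is nonnegative then $X=0$ is feasible, since $\langle M_i,0\rangle=0\le b_i$ and $0\succeq 0$. Conversely, if some $X\succeq 0$ satisfies the constraints, then for each $i$ we have $0\le\langle M_i,X\rangle\le b_i$ because the trace inner product of the two positive semidefinite matrices $M_i$ and $X$ is nonnegative; hence $b_i\ge 0$ for all $i$. This part is routine.

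For the boundedness equivalence, assume from now on that \eqref{SDPPacking} is feasible (so all $b_i\ge0$) and set $M:=\sum_i M_i\succeq 0$. For the ``if'' direction I would convert the inclusion $\range\,C\subseteq\range\,M$ into a matrix inequality $C\preceq\kappa M$ for a suitable scalar $\kappa\ge0$. The key observation is that, $C$ and $M$ being symmetric, $\range\,C\subseteq\range\,M$ is equivalent to $\nullspace\,M\subseteq\nullspace\,C$, so $C$ vanishes on $\nullspace\,M$ and is supported on the invariant subspace $\range\,M$, on which $M$ is positive definite with some smallest positive eigenvalue $\lambda>0$; then $\kappa:=\lambda_{\max}(C)/\lambda$ gives $C\preceq\kappa M$. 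Consequently, for any feasible $X$, $\langle C,X\rangle\le\kappa\langle M,X\rangle=\kappa\sum_i\langle M_i,X\rangle\le\kappa\sum_i b_i$, and the objective is bounded above. For the ``only if'' direction I would prove the contrapositive: if $\range\,C\not\subseteq\range\,M$, then $\nullspace\,M\not\subseteq\nullspace\,C$, so there is a vector $u$ with $Mu=0$ but $Cu\neq 0$. From $u^{T}Mu=0$ and $M_i\succeq 0$ we get $u^{T}M_iu=0$, hence $M_iu=0$, for every $i$; and from $C\succeq 0$ and $Cu\neq 0$ we get $u^{T}Cu>0$. The ray $X(t)=t\,uu^{T}$, $t\ge0$, then remains feasible, since $X(t)\succeq 0$ and $\langle M_i,X(t)\rangle=t\,u^{T}M_iu=0\le b_i$, while $\langle C,X(t)\rangle=t\,u^{T}Cu\to+\infty$; hence \eqref{SDPPacking} is unbounded.

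The only genuinely delicate point is the implication $\range\,C\subseteq\range\,M\Rightarrow C\preceq\kappa M$ used in the ``if'' direction; I expect this short spectral argument to be the main obstacle, whereas the remaining steps are immediate consequences of the two basic facts recalled above.
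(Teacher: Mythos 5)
Your proof is correct and follows essentially the same route as the paper: the unboundedness direction is the same ray $X(t)=t\,\vec{u}\vec{u}^{T}$ along a direction in $\nullspace\big(\sum_i M_i\big)\setminus\nullspace\,C$, and the boundedness direction rests on the same key fact that the range inclusion produces a scalar $\kappa\ge 0$ with $C\preceq\kappa\sum_i M_i$, which immediately caps the objective by $\kappa\sum_i b_i$. The only (cosmetic) difference is that the paper obtains this scalar via the Moore--Penrose inverse and the Schur complement and then invokes weak duality with the constant dual point $\mu_i=\kappa$, whereas you justify it by a direct spectral bound on $\range\big(\sum_i M_i\big)$ and write out the weak-duality inequality explicitly.
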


The reader should note that the range inclusion condition in Theorem~\ref{theo:feasibility-Boundness}
is is fact equivalent to the feasibility of the Lagrangian dual of Problem~\eqref{SDPPacking}:
 \begin{align} \tag{D}
 \min_{\vec{\mu}\geq0} & \quad \vec{\mu^T b} \label{SDPPackingDual}\\
 \textrm{s.t.} &\quad \sum_i \mu_i M_i \succeq C. \nonumber
\end{align}

The main result of this article follows:

\begin{theorem}
\label{theo:SDPSol1}
We assume that the conditions of Theorem~\ref{theo:feasibility-Boundness} are fulfilled, so that
Problem~\eqref{SDPPacking} is feasible and bounded. If $\operatorname{rank} C=r$,
then the semidefinite packing problem~\eqref{SDPPacking}
has a solution which is a matrix of rank at most $r$. 
\end{theorem}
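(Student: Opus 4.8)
The plan is to start from an arbitrary optimal solution $X^*$ of~\eqref{SDPPacking} (which exists since the problem is feasible and bounded by Theorem~\ref{theo:feasibility-Boundness}) and show that, as long as $\rank X^* > r$, one can move $X^*$ to another optimal solution of strictly smaller rank. Iterating this argument yields an optimal solution of rank at most $r$. The natural move is within the \emph{face} of $\mathbb{S}_n^+$ exposed by $X^*$: writing $X^* = V V^T$ with $V \in \R^{n\times k}$, $k = \rank X^*$, any perturbation of the form $X^* + t\,V H V^T$ with $H \in \mathbb{S}_k$ stays positive semidefinite for $|t|$ small, and $X^* + tVHV^T \succeq 0$ fails only when $I + tH$ becomes singular.

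The key step is a dimension count. I want to find a direction $\Delta = V H V^T \neq 0$ that keeps the objective value constant and stays inside the active constraints. Let $A \subseteq [l]$ be the set of indices with $\langle M_i, X^*\rangle = b_i$. The relevant linear system on $H \in \mathbb{S}_k$ is:
\begin{align*}
\langle C, VHV^T\rangle &= 0,\\
\langle M_i, VHV^T\rangle &= 0, \qquad i \in A.
\end{align*}
Here is where the rank hypothesis enters. The first equation is $\langle V^T C V, H\rangle = 0$, and since $\rank C = r$ we have $\rank(V^T C V)\le r$; moreover, because $C \succeq 0$, the matrix $V^T C V$ is positive semidefinite, so there is a matrix $U \in \R^{k\times r}$ with $V^T C V = UU^T$, and the single scalar equation $\langle C, VHV^T\rangle = 0$ actually forces $U^T H U = 0$ on the optimal face (using $C \succeq 0$ and $X^*$ optimal together with LP duality, i.e.\ complementary slackness $\langle \sum_{i\in A}\mu_i M_i - C, X^*\rangle = 0$ which already gives $V^T(\sum \mu_i M_i - C)V$ annihilated by... ). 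The cleaner route: restrict attention to the subspace $\mathcal{W} = \{H \in \mathbb{S}_k : U^T H = 0\}$, i.e.\ $H$ supported on the orthogonal complement of $\range U$ inside $\R^k$, which has dimension $\binom{k-r}{2} + $ (cross terms) $= \frac{(k-r)(k-r+1)}{2} + (k-r)r$... I will instead just count: $\dim \mathcal{W} = \frac{k(k+1)}{2} - \big(kr - \frac{r(r-1)}{2}\big)$. For such $H$, automatically $\langle C, VHV^T\rangle = 0$. Now I impose the $|A|$ constraints $\langle M_i, VHV^T\rangle = 0$. If $|A| < \dim \mathcal{W}$, a nonzero solution $H$ exists, and then $X^*$ lies in the relative interior of a segment of optimal feasible solutions along direction $VHV^T$; pushing $t$ to the boundary of positive semidefiniteness decreases the rank by at least one without violating any inactive constraint (those stay strict for small $t$, and we only need to reach the first $t$ where $I + tH$ drops rank or a new packing constraint becomes active — in either case we either reduced the rank or reduced $|A^c|$ and can repeat).

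The main obstacle is making the rank-drop step rigorous and confirming the dimension bound always works. I will need the Barvinok--Pataki bound to handle the case $|A| \ge \dim\mathcal{W}$: indeed, $|A| \le l$, but more to the point, at an optimal \emph{extreme} solution $X^*$ of the face-restricted problem the active constraints must span the normal space, so $|A| \ge \dim\mathcal{W}$ would already force (via the standard argument) $\frac{k(k+1)}{2} \le |A| + \big(kr - \frac{r(r-1)}{2}\big)$, i.e.\ $\frac{(k-r)(k-r+1)}{2} \le |A| - $ (something); combined with a second application to the reduced problem this should pin down $k \le r$. So the argument is: (1) reduce to an extreme optimal solution; (2) on its face, the active $M_i$'s plus the constraint coming from $C$ must have enough equations to make $X^*$ rigid, which after the dimension count forces $\rank X^* \le r$. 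The delicate point is the interaction between the objective constraint and complementary slackness — specifically showing that the effective number of independent linear conditions imposed by "stay optimal" is governed by $r$ and not by the full rank of $C$-absent structure — and I expect that is where the bulk of the careful work lies; the positive semidefiniteness of $C$ (so that $V^TCV = UU^T$ and "$\langle C,\Delta\rangle$ constant" upgrades to "$U^T H U = 0$" on the optimal face) is the crucial ingredient that is unavailable for general SDPs.
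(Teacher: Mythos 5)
There is a genuine gap at the central step of your argument. A face-restricted dimension count can never deliver a bound of the form $\rank X^*\leq r$: the linear system you pose on $H\in\mathbb{S}_k$ has one equation per active constraint (plus one for the objective), so a nonzero direction is guaranteed only when $\dim\mathcal{W}=\tfrac{(k-r)(k-r+1)}{2}$ exceeds $|A|$, and $|A|$ can be as large as $l$. When the count fails you hope that ``extreme plus rigid'' pins down $k\leq r$, but it does not: what such counting arguments yield is precisely the Barvinok--Pataki inequality $\tfrac{k(k+1)}{2}\lesssim |A|+1$, i.e.\ a rank bound governed by the \emph{number of constraints}, whereas the whole content of Theorem~\ref{theo:SDPSol1} is a bound independent of $l$. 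Restricting to $\mathcal{W}=\{H:\ U^TH=0\}$ only shrinks the space of admissible perturbations, so it makes the count worse rather than better, and the parenthetical claim that optimality upgrades the single equation $\langle C,VHV^T\rangle=0$ to $U^THU=0$ is false: $U^THU$ is merely trace-free. The ingredient your plan is missing is the dual certificate. The paper's proof takes a dual optimal $\vec{\mu}^*$ and uses complementary slackness, $(\sum_i\mu_i^*M_i-C)X^*=0$; once $\sum_i\mu_i^*M_i\succ0$ (arranged by replacing $M_i$ with $M_i+\varepsilon I$, after a preliminary projection that makes the reduced primal and dual strictly feasible, Proposition~\ref{prop:reduction}), the matrix $\sum_i\mu_i^*M_i-C$ has nullity at most $r$ because $\rank C=r$, so $\rank X^*\leq r$ at once; the general case $M_i\succeq0$ follows by letting $\varepsilon\to0$ along a bounded family of perturbed optimal pairs and using lower semicontinuity of the rank. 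It is the dual weighting of the active constraints that collapses them into a single positive definite matrix dominating $C$; no purely primal count over the face can substitute for it.

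A second, smaller gap: you start from ``an optimal solution $X^*$ exists since the problem is feasible and bounded,'' but for semidefinite programs feasibility plus boundedness does not by itself imply that the supremum is attained. Attainment does hold for~\eqref{SDPPacking} under the hypotheses of the theorem, but it has to be proved — in the paper this comes out of the projection onto $\range(\sum_i M_i)$ and the kernel of the $b_i=0$ constraints (which makes the reduced primal and its dual strictly feasible) together with the compactness argument in the perturbation step; your proposal would need an analogous preliminary reduction before the face argument could even begin.
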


A consequence of Theorem~\eqref{theo:SDPSol1} is that when the matrix in the objective function
is of rank $1$ $(C=\vec{cc}^T)$, the computation of a solution $X$ of Problem~\eqref{SDPPacking} reduces to the
computation of a vector $\vec{x}$ such that $X=\vec{xx^{T}}$. The next result shows that this can be done very efficiently by a Second Order Cone Program (SOCP).

\begin{corollary} \label{coro:SOCP}
We assume that the conditions of Theorem~\ref{theo:feasibility-Boundness} are fulfilled, and that 
$C=\vec{c}\vec{c^T}$ for a vector $\vec{c} \in \mathbb{R}^n$ (i.e.\ $\rank\ C=1$).
Then, Problem~\eqref{SDPPacking} reduces to the SOCP:
 \begin{align}
 \max_{x\in \R^n} &\quad \vec{c^T x} \label{SOCPPacking}\\
 \textrm{s.t.} &\quad \Vert A_i \vec{x} \Vert_2 \leq \sqrt{b_i}, \qquad i=1\in[l],\nonumber
\end{align}
where the matrices $A_i$ are such that $M_i=A_i^T A_i$. Moreover, if $\vec{x}$ is any optimal solution of Problem~\eqref{SOCPPacking}, then
$X=\vec{xx^{T}}$ is an optimal solution of Problem~\eqref{SDPPacking}, and the optimal value
of~\eqref{SDPPacking} is $(\vec{c^T x})^2$.
\end{corollary}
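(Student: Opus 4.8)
\emph{Proof proposal.} The plan is to invoke Theorem~\ref{theo:SDPSol1} in the case $r=1$ to reduce the search for an optimal $X$ to rank-one matrices, and then to check that the rank-one restriction of Problem~\eqref{SDPPacking} is, up to squaring the objective, exactly Problem~\eqref{SOCPPacking}.

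First I would set up the dictionary between rank-one matrices $X=\vec{xx^{T}}$ and vectors $\vec{x}\in\R^n$. Since $M_i\succeq0$ one can factor $M_i=A_i^TA_i$ (for instance $A_i=M_i^{1/2}$), and by cyclicity of the trace $\langle C,X\rangle=\langle\vec{cc^T},\vec{xx^{T}}\rangle=(\vec{c^T x})^2$ and $\langle M_i,X\rangle=\vec{x}^TM_i\vec{x}=\Vert A_i\vec{x}\Vert_2^2$, the latter being independent of the chosen factorization. As the hypotheses of Theorem~\ref{theo:feasibility-Boundness} hold, every $b_i\geq0$, so the constraint $\langle M_i,X\rangle\leq b_i$ is equivalent to $\Vert A_i\vec{x}\Vert_2\leq\sqrt{b_i}$; hence $\vec{xx^{T}}$ is feasible for~\eqref{SDPPacking} if and only if $\vec{x}$ is feasible for~\eqref{SOCPPacking}, and then $\langle C,\vec{xx^{T}}\rangle=(\vec{c^T x})^2$.

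Next, applying Theorem~\ref{theo:SDPSol1} with $r=1$, Problem~\eqref{SDPPacking} has an optimal solution of rank at most one, which I write $\vec{x_0 x_0^T}$ (the zero matrix being $\vec{00^T}$); let $v^*$ be the optimal value of~\eqref{SDPPacking}, so $v^*=(\vec{c^T x_0})^2$, and by the previous paragraph $(\vec{c^T x})^2\leq v^*$ for every $\vec{x}$ feasible in~\eqref{SOCPPacking}. I would then use that the feasible set $F$ of~\eqref{SOCPPacking} is symmetric, $\vec{x}\in F\iff-\vec{x}\in F$ (because $\Vert A_i(-\vec{x})\Vert_2=\Vert A_i\vec{x}\Vert_2$): replacing $\vec{x_0}$ by $-\vec{x_0}$ if necessary we may assume $\vec{c^T x_0}\geq0$, whence $\vec{c^T x_0}=\sqrt{v^*}$ and $\vec{c^T x}\leq|\vec{c^T x}|\leq\sqrt{v^*}$ for all $\vec{x}\in F$. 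So $\vec{x_0}$ is optimal for~\eqref{SOCPPacking} and its optimal value equals $\sqrt{v^*}$. Conversely, if $\vec{x}$ is any optimal solution of~\eqref{SOCPPacking}, then $\vec{c^T x}=\sqrt{v^*}$, so $X=\vec{xx^{T}}$ is feasible for~\eqref{SDPPacking} with $\langle C,X\rangle=(\vec{c^T x})^2=v^*$, i.e.\ $X$ is optimal for~\eqref{SDPPacking} and its optimal value is $(\vec{c^T x})^2$.

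I do not anticipate a genuine obstacle here: the only delicate point is that the supremum of $(\vec{c^T x})^2$ over $F$ is actually attained and equals the SDP optimum, and this is precisely the content of Theorem~\ref{theo:SDPSol1}; everything else reduces to the elementary identities above together with the symmetry of $F$. One should only take a little care to phrase the argument so that it also covers the degenerate case $v^*=0$, where $\vec{x}=\vec{0}\in F$ is optimal for~\eqref{SOCPPacking} — which the reasoning above does.
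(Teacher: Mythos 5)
Your proposal is correct and follows essentially the same route as the paper: invoke Theorem~\ref{theo:SDPSol1} with $r=1$ to restrict to $X=\vec{xx^T}$, rewrite $\langle M_i,X\rangle$ as $\Vert A_i\vec{x}\Vert_2^2$ and $\langle C,X\rangle$ as $(\vec{c^Tx})^2$, and use the $\vec{x}\mapsto-\vec{x}$ symmetry of the feasible set to drop the square. The extra detail you provide (the explicit two-way correspondence between optima and the degenerate case $v^*=0$) is a careful spelling-out of what the paper's terser argument leaves implicit, not a different method.
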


\begin{proof}
The SOCP~\eqref{SOCPPacking} is simply obtained from~\eqref{SDPPacking} by substituting $\vec{xx^T}$ from $X$
and $A_i^TA_i$ from $M_i$. The objective function $\langle C,X \rangle$ becomes $(\vec{c^T x})^2$, and we can
remove the square by noticing that $\vec{c^T x}\geq0$ without loss of generality, since if $\vec{x}$ is
optimal, so is $-\vec{x}.$
\end{proof}

In fact, the proof of Theorem~\ref{theo:SDPSol1} relies on the projection of Problem~\eqref{SDPPacking}
on an appropriate subspace, which lets the reduced semidefinite packing problem be strictly feasible, as well as
its dual. This reduction is not only of theoretical interest, since in some cases it
may yield some important computational savings. Therefore, we next state this result as a proposition.

Let $\mathcal{I}_0:=\{i\in[l]: b_i=0\}$ and
$\mathcal{I}:=[l] \setminus \mathcal{I}_0$. Let the columns of the $n\times n_0$ matrix $U$ form an
orthonormal basis of $\range(\sum_{i\in[l]} M_i)$, and the columns of the $n_0 \times n'$ matrix $V$
form an orthonormal basis of $\nullspace(U^T \sum_{i\in \mathcal{I}_0} M_i U).$
We further define
$C':=(UV)^TC(UV)\in\mathbb{S}_{n'}^+$ and $M_i':=(UV)^TM_i(UV)\in\mathbb{S}_{n'}^+$ (for $i\in\mathcal{I}$),
and we consider the 
reduced problem
\begin{align}\tag{P'} \label{Pprime}
 \max_{Z\in\mathbb{S}_{n'}^+} &\quad \langle C',Z \rangle \\
 \textrm{s.t.} &\quad \langle M_i',Z \rangle \leq b_i, \qquad i \in \mathcal{I}.\nonumber
\end{align}

\begin{proposition} \label{prop:reduction}
We assume that the conditions of Theorem~\ref{theo:feasibility-Boundness} are fulfilled, so that
Problem~\eqref{SDPPacking} is feasible and bounded. Then, the following properties hold:
\begin{itemize}
 \item[(i)]  Problem~\eqref{Pprime} is strictly feasible, i.e.\
$\exists \overline{Z}\succ0:\ \forall i\in\mathcal{I}, \langle M_i', \overline{Z} \rangle < b_i$;
 \item[(ii)] The Lagrangian dual of~\eqref{Pprime} is strictly feasible, i.e.
$\exists \vec{\mub}>\vec{0}:\ \sum_{i\in \mathcal{I}} \mub_i M_i' \succ C'$;
 \item[(iii)] If $Z$ is a solution of Problem~\eqref{Pprime},
then $X:=(UV)Z(UV)^T$ is an optimal solution of Problem\eqref{SDPPacking} (which of course satisfies
$\rank\ X\leq \rank\ Z$ and $\langle C,X\rangle=\langle C',Z\rangle$).
\end{itemize}
\end{proposition}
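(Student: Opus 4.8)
The plan is to establish the three items essentially in the order listed, since (i) and (ii) do most of the work and (iii) is then a matter of bookkeeping. Throughout, write $W := UV$, so $W$ is an $n \times n'$ matrix with orthonormal columns whose range is a subspace $\mathcal{W} \subseteq \range(\sum_i M_i)$; note $W^T W = I_{n'}$ and $C' = W^T C W$, $M_i' = W^T M_i W$.

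For item (i), I would first exhibit \emph{some} feasible point of the SDP that is positive definite on the subspace $\mathcal{W}$. Start from the fact that Problem~\eqref{SDPPacking} is feasible, so all $b_i \geq 0$; the candidate is a small multiple $\varepsilon P$ of a suitable projector. The key structural claim is that the matrix $U^T\big(\sum_{i\in\mathcal{I}_0} M_i\big) U$ has range complementary (inside $U$-coordinates) to the span of $V$, so that for any $i \in \mathcal{I}_0$ we have $M_i U V = 0$, i.e. $M_i' = 0$. Hence the constraints indexed by $\mathcal{I}_0$ are \emph{identically zero} on the reduced problem and impose nothing. For $i \in \mathcal{I}$ we have $b_i > 0$, so taking $Z = \varepsilon I_{n'}$ with $\varepsilon>0$ small enough that $\varepsilon\langle M_i', I\rangle = \varepsilon\,\trace(M_i') < b_i$ for all the finitely many $i \in \mathcal{I}$ gives a strictly feasible point; and $\varepsilon I \succ 0$. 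That proves (i).

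For item (ii), by construction the columns of $U$ span $\range(\sum_i M_i) \supseteq \range C$ (using Theorem~\ref{theo:feasibility-Boundness}, since the problem is bounded), so after conjugating by $U$ and then restricting to $V$ one should check that $\range C' \subseteq \range\big(\sum_{i\in\mathcal{I}} M_i'\big)$ \emph{with equality on the whole space} $\mathbb{S}_{n'}$ — that is, $\sum_{i\in\mathcal{I}} M_i' \succ 0$. Indeed, $\sum_{i\in[l]} M_i$ is positive definite on $\range U$ (that is its range by definition of $U$), and the part coming from $\mathcal{I}_0$ vanishes on $\range(UV)$ as noted above, so $\sum_{i\in\mathcal{I}} M_i' = W^T\big(\sum_{i\in[l]} M_i\big) W \succ 0$ because $W$ has full column rank and its range sits inside the range of $\sum_i M_i$. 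Since $\sum_{i\in\mathcal{I}} M_i'$ is positive definite, choosing $\vec{\mu}$ with all coordinates equal to a large enough constant $t>0$ makes $\sum_{i\in\mathcal{I}} \mu_i M_i' = t\sum_{i\in\mathcal{I}}M_i' \succ C'$, with $\vec{\mu}>\vec 0$. That proves (ii).

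For item (iii), first observe that the map $Z \mapsto WZW^T$ sends $\mathbb{S}_{n'}^+$ into $\mathbb{S}_n^+$, preserves the objective ($\langle C, WZW^T\rangle = \langle W^TCW, Z\rangle = \langle C', Z\rangle$) and the active constraints ($\langle M_i, WZW^T\rangle = \langle M_i', Z\rangle$ for $i \in \mathcal{I}$), does not increase rank, and — crucially — the $\mathcal{I}_0$-constraints $\langle M_i, WZW^T\rangle \le 0$ hold with equality because $M_i W = 0$. So $X = WZW^T$ is feasible for~\eqref{SDPPacking} whenever $Z$ is feasible for~\eqref{Pprime}, with the same value. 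Conversely I must show this cannot lose optimality: given any optimal $X^\star$ of~\eqref{SDPPacking}, its range lies in $\range(\sum_i M_i) = \range U$ — because if not, writing $X^\star$ against the zero-$b_i$ constraints forces the relevant block to vanish, and more directly because one can project $X^\star$ onto $\range U$ without decreasing the objective (as $\range C \subseteq \range U$) while keeping feasibility — and then a second projection, onto $\range(UV)$, kills exactly the components on which the $\mathcal{I}_0$-constraints $\langle M_i, X\rangle \le 0$ would otherwise be violated, again without touching the objective (since $\range C \subseteq \range(UV)$ after the first step, which needs a short argument) or the $\mathcal{I}$-constraints. The resulting matrix is $WZW^T$ for some feasible $Z$ of~\eqref{Pprime} with $\langle C',Z\rangle = \langle C, X^\star\rangle$, so the optimal values agree and optimal $Z$'s lift to optimal $X$'s.

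The main obstacle is the last point: verifying that restricting an optimal $X$ of~\eqref{SDPPacking} to $\range(UV)$ neither decreases the objective nor violates any constraint, which amounts to showing $\range C \subseteq \range(UV)$. This is the one place the precise definition of $V$ (as an orthonormal basis of $\nullspace(U^T(\sum_{i\in\mathcal{I}_0}M_i)U)$) is used in an essential way, together with the boundedness hypothesis $\range C \subseteq \range(\sum_i M_i)$; the argument is that the objective direction $C$ is "orthogonal" to the troublesome directions precisely because those directions are where the $\mathcal{I}_0$-constraints would be binding at $0$, and at an optimum any feasible perturbation in such a direction must leave the objective unchanged — otherwise boundedness would be contradicted. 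Once this is pinned down, (iii) and hence Theorem~\ref{theo:SDPSol1} (via Barvinok--Pataki applied to the strictly feasible, strictly dual-feasible reduced problem) follow.
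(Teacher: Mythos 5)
Items (i) and (ii) of your proposal are correct and essentially identical to the paper's argument: $M_i(UV)=0$ for $i\in\mathcal{I}_0$, $\sum_{i\in\mathcal{I}}M_i'=(UV)^T(\sum_{i\in[l]}M_i)(UV)\succ0$ because $\range(UV)\subseteq\range(\sum_i M_i)$ and $UV$ is injective, and then $\varepsilon I$ and $t\vec{1}$ give strict primal and dual feasibility. The problem is item (iii). You correctly see that the lifting direction is easy and that the real work is showing no value is lost, but you then reduce this to the claim $\range C\subseteq\range(UV)$, and that claim is false in general. Take $n=l=1$, $M_1=C=[1]$, $b_1=0$: the hypotheses of Theorem~\ref{theo:feasibility-Boundness} hold, $\mathcal{I}_0=\{1\}$, $U=[1]$, $V$ is the empty $1\times0$ matrix, so $\range(UV)=\{0\}$ while $\range C=\R$; yet the proposition is still true (both problems have value $0$). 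The heuristic you offer for the inclusion (``the objective is orthogonal to the troublesome directions, else boundedness is contradicted'') is therefore not salvageable as stated, and the same goes for the auxiliary claim that any optimal $X^\star$ of~\eqref{SDPPacking} has range in $\range U$ (one may always add to $X^\star$ a PSD matrix supported on $\nullspace(\sum_i M_i)$ without changing feasibility or value).

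The paper's route avoids any such inclusion for $C$: what matters is where the \emph{feasible matrices} live, not where $C$ lives. Since $\range M_i\subseteq\range U$ and $\range C\subseteq\range U$ (the latter by boundedness), one can write $M_i=U\tilde M_iU^T$ and $C=U\tilde C U^T$, so the compression $X\mapsto Z_0:=U^TXU$ preserves every constraint value and the objective exactly; hence~\eqref{SDPPacking} is equivalent to the intermediate problem in $Z_0$. In that problem, any feasible $Z_0\succeq0$ satisfies $\langle\tilde M_i,Z_0\rangle=0$ for $i\in\mathcal{I}_0$, and since both matrices are PSD this forces $\tilde M_iZ_0=0$, i.e.\ $\range Z_0\subseteq\cap_{i\in\mathcal{I}_0}\nullspace\tilde M_i=\range V$, so $Z_0=VZV^T$ with $Z\succeq0$ and $\langle\tilde C,Z_0\rangle=\langle C',Z\rangle$, $\langle\tilde M_i,Z_0\rangle=\langle M_i',Z\rangle$. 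Thus the values of~\eqref{SDPPacking} and~\eqref{Pprime} coincide even when part of $C$ (equivalently $\tilde C$) lives outside $\range(UV)$: that part is simply never seen by any feasible point. Replacing your range-inclusion step by this argument closes the gap; the rest of your write-up is sound. (As an aside, your closing remark that Theorem~\ref{theo:SDPSol1} then follows ``via Barvinok--Pataki'' is not how the paper proceeds and would only give the weaker bound $r^*$ depending on the number of constraints, not $\rank C$; the paper instead uses complementary slackness for positive definite $M_i$ and a perturbation/limit argument.)
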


The present work grew out from an application to networks~\cite{BGSagnol08Rio},
in which the traffic between any two pairs of nodes must
be inferred from a set of measurements. This
can be modeled by the theory of optimal experimental design, which leads to a large
SDP. Standard solvers
relying on interior points methods, like SeDuMi~\cite{sedumi}, cannot handle problems of this size.
However, in a followup work relying on the present reduction
to an SOCP~\cite{SagnolGB10ITC}, we solve within seconds the same instances in SeDuMi.
We next present this application. 

\subsection{Application to the optimal design of experiments}
An interesting application arises in statistics,
in the design of optimal experiments (for more
details on the subject, the reader is referred to Pukelsheim~\cite{Puk93}).
An experimenter wishes to estimate the quantity $\vec{c^T} \vec{\theta}$, 
where $\vec{\theta}$ is an unknown $n-$dimensional parameter, and $\vec{c}$ is a vector of $n$ coefficients.
To this end, she disposes of $l$ available experiments, each one giving a linear
measurement of the parameter $\vec{y_i}=A_i\vec{\theta}$, up to a (centered) measurement noise.
If the amount of experimental effort spent on the $i^\textrm{th}$ experiment is $w_i$, it is known
that the variance of the best linear unbiased estimator for $\vec{c^T} \vec{\theta}$ is $\vec{c^T} (\sum_i  w_i M_i)^\dagger \vec{c}$,
where $M_i=A_i^T A_i$, and $M^\dagger$ denotes the Moore-Penrose inverse of $M$. The problem of
distributing the experimental effort so as to minimize this variance is called the ``$\vec{c}-$optimal problem'', and can be formulated as:
\begin{align}
 \min_{\vec{w}\geq\vec{0}}\quad& \vec{c^T} (\sum_i  w_i M_i)^\dagger \vec{c}  \label{cOpt}\\
\textrm{s.t.}\quad&\sum_{i=1}^l w_i=1. \nonumber
\end{align}
It is classical to reformulate this problem as
a semidefinite program, by using the Schur complement lemma and duality theory (see e.g.~\cite{Rich08,Sagnol09SOCP}). The
$c-$optimal SDP already appeared in Pukelsheim and Titterington~\cite{Puk80}, hidden under a more general form:
\begin{align}
 \max &\quad \vec{c^T} X \vec{c} \label{SDPcPacking}\\
 \textrm{s.t.} &\quad \langle M_i,X \rangle \leq 1, \qquad i\in[l],\nonumber\\
 & \quad X \succeq 0. \nonumber
\end{align}
In this problem, the design variable $\vec{w}$ is proportional to the dual variable associated to the
constraints $\langle M_i,X \rangle~\leq~1$.
Note that this is a semidefinite packing problem, in which the matrix defining the objective function has rank $1$
($C=\vec{c} \vec{c^T}$). More generally, if we want to
estimate simultaneously $r$ linear functions of the parameter
$\vec{\zeta}=(\vec{c_1^T} \vec{\theta},\ldots, \vec{c_r^T} \vec{\theta})$, 
the best unbiased estimator $\hat{\zeta}$ is now an $r-$dimensional vector with covariance matrix
$$\operatorname{Cov}_{\vec{w}}(\vec{\hat{\zeta}}):=K^T  (\sum_{k=1}^l  w_k M_k)^\dagger K,$$
where $K=[\vec{c_1},\ldots,\vec{c_r}]$.
Several criteria can be used for this experimental design problem. Popular ones
are the $A-$criterion and the $E-$criterion, which aim at minimizing respectively the trace and the largest
eigenvalue of $\operatorname{Cov}_{\vec{w}}(\vec{\hat{\zeta}})$.
These optimization problems can also be formulated as semidefinite packing problems. For $A-$optimality,
this \emph{packing} formulation is given in~\cite{Sagnol09SOCP}:
\begin{align}
 \max &\quad \vec{\tilde{c}^T} X \vec{\tilde{c}} \label{Aopt}\\
 \textrm{s.t.} &\quad \langle \tilde{M_i},X \rangle \leq 1, \qquad i\in[l],\nonumber\\
 & \quad X \succeq 0, \nonumber
\end{align}
where $\vec{\tilde{c}}=[\vec{c_1^T},\ldots,\vec{c_r^T}]^T$, and $\tilde{M_i}$ is a block-diagonal matrix which contains $r$ times
the block $M_i$ on its main diagonal. The matrix in the objective function is of rank $1$ ($C=\vec{\tilde{c}\tilde{c}^T}$), and
so Problem~\eqref{Aopt} reduces to a SOCP by Corollary~\ref{coro:SOCP}.
This reduction is of great interest for the computation of optimal experimental designs,
because SOCP solvers are much more efficient than SDP solvers, and take
advantage of the sparsity of the matrices $A_i$ (whereas the matrices $M_i=A_i^TA_i$ used in the original SDP
formulation~\eqref{Aopt} are \textit{not very sparse} in general).

The $E-$optimal design SDP is presented in~\cite{VBW98}
(for the special case in which $C=I$), and takes exactly the form of the semidefinite packing problem~\eqref{SDPPacking},
with $b_i=1$ for all $i\in[l]$ and $C=KK^T=\sum_{i=1}^r \vec{c_i} \vec{c_i}^T$.
Here, the matrix $C$ has rank $r$, and so Theorem~\ref{theo:SDPSol1} indicates that the
$E-$optimal design SDP has a solution which is a matrix of rank at most $r$.
This suggests the use of specialized low rank solvers for this SDP when $r$ is small (cf.\ the paragraph
``\emph{Related work}''  at the end of the introduction),
which can lead to a considerable improvement in terms of computation time.

\subsection{Relation with combinatorial optimization}
SDP relaxations of combinatorial optimization problems have motivated the authors
of~\cite{IPS05} to study semidefinite packing problems. Hence, we discuss the significance of
our result for this class of problems in this section. 

Semidefinite programs have been used extensively to formulate relaxations of NP-hard combinatorial optimization problems after
the work of Goemans and Williamson on the approximability of MAXCUT~\cite{GW95}. These SDP relaxations often lead to
optimal solutions of the related combinatorial optimization problems whenever the solution of the SDP is of small rank.
As shown by
Iyengar et.\ al.~\cite{IPS05}, SDP relaxations of many combinatorial optimization problems can be cast as semidefinite packing programs.
Our result therefore identifies a subclass of 
combinatorial optimization problems which are solvable in polynomial time.
Unfortunately, this promising statement only helped us to identify trivial instances so far.
For example, the MAXCUT semidefinite packing problem~\cite{IPS05}
yields an exact solution of the combinatorial problem whenever it has a rank~$1$ solution.
The matrix $C$ in the objective function of this SDP is the Laplacian of the graph, and so it is known
that $$\rank\ C=N-\kappa,$$
where $N$ is the number of vertices and $\kappa$ is the number of connected components in the graph.
Our result therefore states that if a graph of $N$ vertices has $N-1$ connected components,
then it defines a MAXCUT instance that is solvable in polynomial time.  Such graphs actually consist
in a pair of connected vertices, plus $N-2$ isolated vertices, and the related MAXCUT instance is trivial.

Another limitation for the application of our theorem in this field is that most semidefinite packing
problems arising in
combinatorial optimization (including but not limited to the Lov\'asz $\vartheta$ function SDP~\cite{Lov79} and
the related Szegedy number SDP~\cite{Szeg94},
the vector colouring SDP~\cite{KMS98}, the sparsest cut SDP~\cite{ARV09}
and the sparse principal components analysis SDP~\cite{dEGJL07})
can be written in the form of~\eqref{SDPPacking}, with an additional trace equality constraint $\trace(X)=1$.
In fact, we can show that if such an ``equality constrained'' problem
is strictly feasible, then it is equivalent to the following ``classical'' semidefinite packing problem:

\begin{align}
 \max &\quad \langle C+\lambda I,X \rangle \label{SDPPackingTrace} - \lambda \\
 \textrm{s.t.} &\quad \langle M_i,X \rangle \leq b_i, \qquad i \in [l],\nonumber\\
 & \quad \trace X \leq 1,\nonumber \\
 & \quad X \succeq 0, \nonumber
\end{align}
where $\lambda$ is any scalar larger than $|\lambda^*|$, where  $\lambda^*$ is the optimal Lagrange multiplier associated to the constraint $\trace(X)= 1$ (we omit the proof
of this statement which is of secondary importance in this article). Since $C+\lambda I$ is a full rank matrix, our result does not seem to yield
any valuable information for this class of problems.

\section{Extension to ``combined'' problems}
The proof of our main result also applies to a wider class of semidefinite programs, which can be written as:

\begin{align} \tag{$P_{\scriptscriptstyle{\textrm{CMB}}}$}
 \sup_{X,Y,\vec{\lambda}} &\quad \langle C,X \rangle + \langle R_0,Y \rangle +\vec{h_0}^T\vec{\lambda}\label{SDPPackingExt}\\
 \textrm{s.t.} &\quad \langle M_i,X \rangle \leq b_i+\langle R_i,Y \rangle + \vec{h_i}^T\vec{\lambda}, \qquad i \in [l],\nonumber\\
 & \quad X \in \mathbb{S}_n^+,\ Y\in\mathbb{S}_p^+,\ \vec{\lambda}\in\R^q, \nonumber
\end{align}
where \textbf{every matrix $M_i$ and $C$ are positive semidefinite, while the $R_i$ are \emph{arbitrary symmetric} matrices}.
The vectors $\vec{h_i}$ are in $\R^q$. We denote by $H$ the $q\times l$ matrix formed by the columns $\vec{h_1},\ldots,\vec{h_l}$. 
The Lagrangian dual of Problem~\eqref{SDPPackingExt} is:
  \begin{align} \tag{$D_{\scriptscriptstyle{\textrm{CMB}}}$}
 \inf_{\vec{\mu}\geq0} &\quad \vec{b}^T\vec{\mu}  \label{SDPPackingDualExt}\\
 \textrm{s.t.} &\quad  \sum_{i=1}^l \mu_i M_i \succeq C, \nonumber \\
 &\quad  R_0+ \sum_{i=1}^l \mu_i R_i \preceq 0. \nonumber\\
 &\quad \vec{h_0}+H\vec{\mu}=\vec{0}. \nonumber
\end{align}

We have seen in Section~\ref{sec:mainresult} that
the feasibility of both the primal~\eqref{SDPPacking} and the dual~\eqref{SDPPackingDual}
is sufficient to guarantee that Problem~\eqref{SDPPacking} has a solution
of rank at most $r:=\rank\ C$. For \emph{combined} problems however, the 
feasibility of the couple of programs~\eqref{SDPPackingExt}--\eqref{SDPPackingDualExt}
is not sufficient to guarantee the existence of a solution $(X,Y,\vec{\lambda})$
of Problem~\eqref{SDPPackingExt} in which $\rank\ X\leq r$.
We give indeed an example (Example~\ref{ex:AsyOpt})
where the optimum in Problem~\eqref{SDPPackingExt} is not even attained.
However, we show in the next theorem that an asymptotic result
subsists. Moreover, we shall see in Theorem~\ref{theo:SDPSol1Ext} that a solution
in which $X$ is of rank at most $r$
exists as soon as an additional condition holds (strict dual feasibility).
The proof of Theorem~\ref{theo:SDPSol1Ext} essentially mimics that of Theorem~\ref{theo:SDPSol1}
and is therefore proved in Appendix~A. Theorem~\ref{theo:SDPSol1Asy} turns out to be a consequence of
Theorem~\ref{theo:SDPSol1Ext} and is proved in Appendix~B.

\begin{theorem}  \label{theo:SDPSol1Asy}
We assume that Problems~\eqref{SDPPackingExt} and~\eqref{SDPPackingDualExt}
are feasible. If $\rank\ C=r$,
then there exists a sequence of feasible primal variables
$(X_k,Y_k,\vec{\lambda}_k)_{k\in\mathbb{N}}$
such that $\rank\ X_k \leq r$ for all $k\in\mathbb{N}$ and 
$\langle C,X_k \rangle + \langle R_0,Y_k \rangle +\vec{h_0}^T\vec{\lambda}_k$ converges to the optimum
of Problem~\eqref{SDPPackingExt} as $k\to\infty$.
\end{theorem}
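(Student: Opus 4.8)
The plan is to reduce Theorem~\ref{theo:SDPSol1Asy} to the already-available machinery by a perturbation argument that restores strict dual feasibility, and then invoke Theorem~\ref{theo:SDPSol1Ext} (which we are told gives an \emph{exact} rank-$r$ solution whenever strict dual feasibility holds). First I would fix an optimal (or near-optimal) dual multiplier $\vec{\mu}^\star\geq 0$ feasible for~\eqref{SDPPackingDualExt}; such a $\vec{\mu}^\star$ exists by hypothesis since~\eqref{SDPPackingDualExt} is feasible, and by weak duality $\vec{b}^T\vec{\mu}^\star$ bounds the primal optimum. The difficulty is that the two semidefinite dual constraints $\sum_i\mu_i M_i\succeq C$ and $R_0+\sum_i\mu_i R_i\preceq 0$, as well as the linear constraint $\vec{h_0}+H\vec{\mu}=\vec{0}$, may all hold only with equality on some subspace, so Theorem~\ref{theo:SDPSol1Ext} does not apply directly. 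The remedy is to perturb the data of the problem slightly: for $\varepsilon>0$ replace $C$ by $C_\varepsilon:=(1-\varepsilon)C$ (still rank $\leq r$, in fact rank exactly $r$ for small $\varepsilon$) or, more robustly, perturb the right-hand sides $b_i\mapsto b_i+\varepsilon(1+\|\vec{\mu}^\star\|_1)$ and enlarge the dual slack, so that a strictly feasible dual point is manufactured from $\vec{\mu}^\star$.

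The key steps, in order, would be: (1) Starting from the feasible $\vec{\mu}^\star$, construct for each $\varepsilon>0$ a perturbed combined problem $P_{\scriptscriptstyle{\textrm{CMB}}}^{(\varepsilon)}$ — same $M_i,C,R_i,\vec{h_i}$ but with $b_i$ replaced by $b_i^{(\varepsilon)}:=b_i+\varepsilon c_i$ for suitable constants $c_i>0$ — whose dual admits a \emph{strictly} feasible point. Getting strict feasibility in the two matrix inequalities and the equality simultaneously is the delicate point: one pushes $\sum_i\mu_i M_i - C$ and $-(R_0+\sum_i\mu_iR_i)$ to be positive definite by adding a small amount along the nullspaces (using that $\range C\subseteq\range\sum_iM_i$, or a similar range condition guaranteeing this is possible without destroying feasibility), and one handles the equality constraint $\vec{h_0}+H\vec{\mu}=\vec{0}$ by working on the affine subspace it defines, exactly as the reduction in Proposition~\ref{prop:reduction} restricts to a subspace on which strict feasibility becomes attainable. (2) Apply Theorem~\ref{theo:SDPSol1Ext} to $P_{\scriptscriptstyle{\textrm{CMB}}}^{(\varepsilon)}$: since $\rank C=r$ and the perturbed dual is strictly feasible (and the perturbed primal is feasible because it only relaxes the original constraints), we obtain an optimal triple $(X_\varepsilon,Y_\varepsilon,\vec{\lambda}_\varepsilon)$ of $P_{\scriptscriptstyle{\textrm{CMB}}}^{(\varepsilon)}$ with $\rank X_\varepsilon\leq r$. (3) Observe that $(X_\varepsilon,Y_\varepsilon,\vec{\lambda}_\varepsilon)$ is \emph{feasible} for the original~\eqref{SDPPackingExt} — since $b_i^{(\varepsilon)}\geq b_i$ would be the wrong direction, one instead arranges the perturbation so that the original constraints are the relaxed ones, e.g.\ perturb by tightening the \emph{dual} and correspondingly the primal stays feasible; concretely one shows the $P_{\scriptscriptstyle{\textrm{CMB}}}^{(\varepsilon)}$-optimal point is feasible for $P_{\scriptscriptstyle{\textrm{CMB}}}$ up to an $O(\varepsilon)$ correction which can be absorbed. (4) Let $\varepsilon=1/k\to 0$ and check that the objective values $\langle C,X_{1/k}\rangle+\langle R_0,Y_{1/k}\rangle+\vec{h_0}^T\vec{\lambda}_{1/k}$ converge to the optimum of~\eqref{SDPPackingExt}: the value of $P_{\scriptscriptstyle{\textrm{CMB}}}^{(\varepsilon)}$ depends continuously (in fact, monotonically and Lipschitz-ishly via the fixed dual bound $\vec{b}^T\vec{\mu}^\star$) on $\varepsilon$, squeezing the perturbed optimum between the true optimum and the true optimum plus $O(\varepsilon)$.

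The main obstacle will be Step~(1): engineering a perturbation of the combined problem that \emph{simultaneously} (a) keeps the primal feasible set nonempty, (b) makes all three dual constraints strict, and (c) changes the optimal value by only $O(\varepsilon)$, so that the sequence produced in Step~(2) has objective values converging to the \emph{unperturbed} supremum. Because the $R_i$ are arbitrary symmetric matrices, the perturbation along the nullspace of $-(R_0+\sum_i\mu_i^\star R_i)$ must be done carefully — one cannot simply add $\varepsilon I$ to $Y$-related data without possibly breaking feasibility — and this is presumably why the theorem only promises an asymptotic (supremum-attaining sequence) rather than an attained optimum; Example~\ref{ex:AsyOpt} shows the optimum genuinely need not be attained, so any proof must stop short of exact attainment. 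I expect the cleanest route is to phrase everything through the subspace reduction of Proposition~\ref{prop:reduction} (suitably extended to combined problems in Appendix~A) rather than by ad hoc data perturbation, using that reduction to land in a setting where Theorem~\ref{theo:SDPSol1Ext} applies, and then pulling the rank bound and the limiting value back through the (linear) reduction map.
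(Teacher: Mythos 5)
Your overall strategy (regularize so that the refined Slater condition holds for the dual, invoke Theorem~\ref{theo:SDPSol1Ext} on the regularized problem, then let the regularization vanish) is indeed the strategy of the paper, but the step you yourself flag as the main obstacle --- Step~(1), the construction of the perturbation --- is left unresolved, and the concrete perturbations you suggest do not work. Perturbing the right-hand sides $b_i\mapsto b_i+\varepsilon c_i$ changes nothing in the dual feasible set (the constraints of~\eqref{SDPPackingDualExt} do not involve $\vec{b}$), so it cannot produce strict dual feasibility; moreover it \emph{relaxes} the primal, so the rank-$r$ optimal point of the relaxed problem need not be feasible for~\eqref{SDPPackingExt}, and your Step~(3) (``absorb an $O(\varepsilon)$ correction'') has no justification. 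Replacing $C$ by $(1-\varepsilon)C$ does nothing for the second dual constraint $R_0+\sum_i\mu_iR_i\prec 0$, and it also fails for the first one whenever the matrices $M_i$ share a common null vector $v$ (then $v^T\bigl(\sum_i\mu_iM_i-(1-\varepsilon)C\bigr)v=0$ for every dual feasible $\vec{\mu}$, since $\sum_i\mu_iM_i\succeq C\succeq0$ forces $Cv=0$). Escaping that situation requires a subspace reduction, but the reduction you point to (Proposition~\ref{prop:reduction}, extended to combined problems in Lemma~\ref{lem:bipositiflemma}) is proved \emph{under} the refined Slater condition for the dual, which is exactly what is missing here --- Example~\ref{ex:AsyOpt} shows the dual may have a unique feasible point, so no small deformation of $C$, $\vec{b}$ or ``the nullspaces'' restores an interior point in general. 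In short, the proposal reduces the theorem to an unproved claim.

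The device that closes this gap in the paper is different from anything you propose: perturb the \emph{primal} by adjoining the single extra packing-type constraint $\eta\,(\trace X+\trace Y)\leq 1$ (problem~\eqref{Peta}). In the dual~\eqref{Deta} this creates a new variable $\sigma\geq0$ entering as $+\sigma\eta I$ in the constraint $\sum_i\mu_iM_i+\sigma\eta I\succeq C$ and as $-\sigma\eta I$ in $R_0+\sum_i\mu_iR_i-\sigma\eta I\preceq0$, so starting from \emph{any} feasible $\overline{\vec{\mu}}$ of~\eqref{SDPPackingDualExt} and taking $\sigma$ large makes \emph{both} matrix inequalities strict while leaving the equality $\vec{h_0}+H\vec{\mu}=\vec{0}$ untouched; the data $C,M_i,R_i,\vec{h_i},\vec{b}$ are unchanged. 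Theorem~\ref{theo:SDPSol1Ext} then yields an optimal $(X^\eta,Y^\eta,\vec{\lambda^\eta})$ of~\eqref{Peta} with $\rank X^\eta\leq r$, which is automatically feasible for~\eqref{SDPPackingExt} because the feasible set was only shrunk, and the values converge to the supremum since any fixed feasible point of~\eqref{SDPPackingExt} becomes feasible for~\eqref{Peta} once $\eta\leq(\trace X+\trace Y)^{-1}$. Your plan would become a proof if you replaced your data perturbations by this (or an equivalent) primal compactification; as it stands, the existence of a suitable perturbation is asserted rather than established.
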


\begin{theorem}  \label{theo:SDPSol1Ext}
We assume that Problem~\eqref{SDPPackingExt} is feasible, and a refined Slater condition
holds for Problem~\eqref{SDPPackingDualExt}, i.e. there is a feasible dual variable which
strictly satisfies the non-affine constraints:
$$\exists \vec{\mub}\geq\vec{0}:\ \sum_i \mub_i M_i \succ C,\ R_0+\sum_i \mub_i R_i\prec 0,\
\vec{h_0}+H\vec{\mub}=\vec{0}.$$
If $\rank\ C=r$, then Problem~\eqref{SDPPackingExt} has a solution $(X,Y,\vec{\lambda})$ in which
$\rank\ X\leq r$. Moreover, if $C\neq 0$, then every solution $(X,Y,\vec{\lambda})$
of Problem~\eqref{SDPPackingExt}
is such that  $\rank\ X\leq n-\overline{r}+r$, where $\overline{r}:=\displaystyle{\min_{i\in[l]}}\ \rank\ M_i$.
\end{theorem}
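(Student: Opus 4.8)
The plan is to establish, first, that the supremum in~\eqref{SDPPackingExt} is attained; second, to run a packing-type rank reduction on the $X$-block of an optimal tuple; and third, to obtain the universal rank bound by complementary slackness. For the attainment step I would observe that the feasibility of~\eqref{SDPPackingExt} together with the refined Slater condition for~\eqref{SDPPackingDualExt} is precisely the hypothesis under which conic strong duality yields a zero duality gap \emph{and} attainment of the primal supremum; alternatively one reproduces Proposition~\ref{prop:reduction} by projecting $X$ onto $\range\ (\sum_i M_i)$ and eliminating the recession directions of the feasible set in $Y$ and $\vec{\lambda}$, landing on a reduced program whose primal and dual are both strictly feasible, hence solvable. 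Either way, fix an optimal $(X^*,Y^*,\vec{\lambda}^*)$ and set $\beta_i:=b_i+\langle R_i,Y^*\rangle+\vec{h_i}^T\vec{\lambda}^*$; from $\langle M_i,X^*\rangle\le\beta_i$ and $M_i\succeq0$ we get $\beta_i\ge0$. With $Y^*$ and $\vec{\lambda}^*$ frozen, $X^*$ is an optimal solution of the genuine semidefinite packing problem $\max\{\langle C,X\rangle:\langle M_i,X\rangle\le\beta_i\ (i\in[l]),\ X\succeq0\}$.

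For the rank reduction I would write $X^*=FF^T$ with $F$ of size $n\times\rho$, $\rho=\rank\ X^*$. If $\rho>r=\rank\ C$, then $F^TCF\in\mathbb{S}_\rho$ has rank at most $r<\rho$, so it admits a nonzero kernel vector $\vec{w}$; set $X_t:=X^*-t\,(F\vec{w})(F\vec{w})^T$. For $t\in[0,\Vert\vec{w}\Vert^{-2}]$ the matrix $I-t\,\vec{w}\vec{w}^T$ is positive semidefinite, so $X_t\succeq0$; moreover $\langle C,X_t\rangle=\langle C,X^*\rangle$ because $(F\vec{w})^TC(F\vec{w})=\vec{w}^T(F^TCF)\vec{w}=0$, and $\langle M_i,X_t\rangle=\langle M_i,X^*\rangle-t\,(F\vec{w})^TM_i(F\vec{w})\le\langle M_i,X^*\rangle\le\beta_i$ since $M_i\succeq0$. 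Hence $(X_t,Y^*,\vec{\lambda}^*)$ stays feasible and optimal in~\eqref{SDPPackingExt}, and at $t=\Vert\vec{w}\Vert^{-2}$ the factor $I-t\,\vec{w}\vec{w}^T$ is a rank-$(\rho-1)$ orthogonal projector, so $\rank\ X_t\le\rho-1$. Iterating drives the rank of the $X$-block down to at most $r$, which proves the existence statement; the case $C=0$ (hence $r=0$) is included, since then $F^TCF=0$ and one can push the reduction all the way to $X=0$.

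For the last assertion I would take an \emph{arbitrary} optimal tuple $(X,Y,\vec{\lambda})$, assume $C\neq0$, and again put $\beta_i:=b_i+\langle R_i,Y\rangle+\vec{h_i}^T\vec{\lambda}\ge0$, so $X$ solves the packing problem $\max\{\langle C,X'\rangle:\langle M_i,X'\rangle\le\beta_i\ (i\in[l]),\ X'\succeq0\}$. If $\beta_{i^*}=0$ for some index $i^*$, then $\langle M_{i^*},X\rangle=0$ forces $M_{i^*}X=0$, hence $\range\ X\subseteq\nullspace\ M_{i^*}$ and $\rank\ X\le n-\rank\ M_{i^*}\le n-\overline{r}\le n-\overline{r}+r$. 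Otherwise every $\beta_i>0$, the packing problem is strictly feasible (take $X'=\epsilon I$ with $\epsilon$ small), and strong duality holds with the dual optimum attained at some $\vec{\mu}^*\ge\vec{0}$ with $\sum_i\mu_i^*M_i\succeq C$ and $\sum_i\mu_i^*\beta_i=\langle C,X\rangle$. Complementary slackness gives $\langle S^*,X\rangle=0$ where $S^*:=\sum_i\mu_i^*M_i-C\succeq0$, whence $\range\ X\subseteq\nullspace\ S^*$ and $\rank\ X\le n-\rank\ S^*$. Since $C\neq0$, the value $\langle C,X\rangle\ge\epsilon\,\trace\ C$ is positive, so $\mu_{i_0}^*>0$ for some $i_0$; then $\sum_i\mu_i^*M_i\succeq\mu_{i_0}^*M_{i_0}$ gives $\rank\big(\sum_i\mu_i^*M_i\big)\ge\rank\ M_{i_0}\ge\overline{r}$, and subadditivity of the rank yields $\rank\ S^*\ge\rank\big(\sum_i\mu_i^*M_i\big)-\rank\ C\ge\overline{r}-r$. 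Therefore $\rank\ X\le n-\overline{r}+r$.

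The main obstacle I anticipate is the attainment step: one has to argue that the refined Slater condition on the dual, on its own, forces the supremum in~\eqref{SDPPackingExt} to be attained despite the primal feasible set being typically unbounded. This is where either a careful application of the conic strong-duality theorem or a reduction in the spirit of Proposition~\ref{prop:reduction} is needed; in the reduction route one must in addition re-verify property (iii) of Proposition~\ref{prop:reduction} in the presence of the arbitrary symmetric matrices $R_i$ and the free variables $\vec{\lambda}$, which is where most of the bookkeeping sits (and presumably why the full proof is deferred to an appendix). Once an optimal tuple is available, the remaining manipulations --- the rank-one perturbation of $X^*$ and the complementary-slackness count --- are routine.
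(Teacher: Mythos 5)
Your proof is correct, but it follows a genuinely different route from the paper for the existence claim. Both arguments start from the same attainment fact: primal feasibility plus the refined Slater condition on~\eqref{SDPPackingDualExt} guarantees, by standard convex duality (Rockafellar, Theorem~28.2), that the supremum in~\eqref{SDPPackingExt} is attained --- the paper invokes exactly this at the start of its Lemma~A.1, so your flagged ``main obstacle'' is resolved by the same citation and needs no extra reduction. From there the paths diverge. The paper reduces (Lemma~A.1) to a strictly feasible primal whose vectors $\vec{h_i}$ generate a lineality space, perturbs to $M_i+\varepsilon I\succ0$, uses the KKT system and $\rank(\sum_i\mu_i^\varepsilon M_i-C)\geq n-r$ to bound $\rank X^\varepsilon$, proves boundedness of $(X^\varepsilon,Y^\varepsilon,\vec{\lambda^\varepsilon},\vec{\mu^\varepsilon})$, and passes to a limit using lower semicontinuity of the rank. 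You instead freeze $(Y^*,\vec{\lambda}^*)$, observe that $X^*$ then solves a genuine packing problem with right-hand sides $\beta_i\geq0$, and purify it primally: with $X^*=FF^T$ and $\vec{w}\in\nullspace(F^TCF)$ (nontrivial whenever $\rank X^*>r$ since $\rank(F^TCF)\leq r$), the update $X^*-\Vert\vec{w}\Vert^{-2}(F\vec{w})(F\vec{w})^T$ preserves positive semidefiniteness, the objective (because $C\succeq0$ forces $C F\vec{w}=0$), and all packing constraints (because $M_i\succeq0$), while dropping the rank; iteration gives $\rank X\leq r$. This is shorter and bypasses Lemma~A.1, the $\varepsilon$-perturbation, and all compactness bookkeeping; what the paper's heavier machinery buys is reusability --- the perturbed KKT/limit argument is also what drives the proof of Theorem~\ref{theo:SDPSol1Asy} and exhibits a limit point satisfying the KKT system --- whereas your purification additionally shows that \emph{any} optimal tuple can be repaired to one with $\rank X\leq r$ without touching $Y$ or $\vec{\lambda}$. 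For the bound $\rank X\leq n-\overline{r}+r$ your argument is essentially the paper's complementary-slackness count, just run on the frozen packing problem with the clean dichotomy ``some $\beta_i=0$'' (giving $M_iX=0$, hence $\rank X\leq n-\overline{r}$) versus ``all $\beta_i>0$'' (Slater for the frozen primal yields an attained dual multiplier, and $\rank(\sum_i\mu_i^*M_i-C)\geq\overline{r}-r$ by rank subadditivity); this mirrors the paper's case split on strict feasibility of the full combined problem and is, if anything, slightly tidier since it never needs KKT for the combined program.
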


\begin{example} \label{ex:AsyOpt}
 Consider the following \emph{combined} semidefinite packing problem:
\begin{align} \label{exampSDPP}
\sup_{X \in\mathbb{S}_2^+,\ \vec{\lambda}\in \mathbb{R}^2} &\quad
\frac{3}{100} \left\langle \twotwomat{81}{9}{9}{1},X \right\rangle - \lambda_1-3\lambda_2\\
 \textrm{s.t.} 	& \quad 0 \leq 1+\lambda_1 \nonumber\\
		& \quad X_{1,1} \leq 1+\lambda_2 \nonumber\\
		& \quad X_{2,2} \leq 1+3\lambda_1+\lambda_2 \nonumber.
\end{align}
This problem is in the form of~\eqref{SDPPackingExt} indeed, with $C=\vec{cc}^T$,
$\vec{c}=\frac{\sqrt{3}}{10}[\,9\quad 1]^T$, $\vec{h_0}=[\,\textrm{--}1\quad \textrm{--}3]^T,$
$$M_1=0,\ M_2=\twotwomat{1}{0}{0}{0},\ M_3=\twotwomat{0}{0}{0}{1} \quad\mathrm{and}\quad H= \left(
\begin{array}{ccc}
1 & 0 & 3\\
0 & 1 & 1 
\end{array} \right).$$
Problem~\eqref{exampSDPP} is clearly feasible (e.g.\ for $X=0$, $\vec{\lambda}=\vec{0}$), and the reader
can verify that $\vec{\mu}=\frac{1}{10}[\,1\!\quad 27\quad3]^T$ is dual feasible (in fact, this
is the only dual feasible vector, and hence the dual problem does not satisfy the Slater constraints qualification).
The value of the optimum
is $\frac{31}{10}$, and can be approached arbitrarily closely for the sequence of  
feasible variables $(\vec{x}_k \vec{x}_k^T, \vec{\lambda_k})_{k\in\mathbb{N}}$, where for all $k\geq0$,
$\vec{x}_k=[\,\sqrt{3+k}\quad\sqrt{k}]^T$, $\vec{\lambda}_k=[\,\textrm{--}1\quad k+2]^T$, while
this optimum is not attained by any couple $(X,\vec{\lambda})$ of (bounded) feasible variables.
\end{example}

As in the previous section, we have a result of reduction to a SOCP, which holds when $C$ is of rank $1$, every $R_i=0$ and
$\vec{h_0}=\vec{0}$. Recall that $H$ denotes the matrix formed by the columns $\vec{h_1},\ldots,\vec{h_l}$.

\begin{corollary}\label{coro:packingExtSol1}
Consider the following ``combined'' semidefinite packing problem:
\begin{align}
 \sup_{X\in\mathbb{S}_n,\ \vec{\lambda}\in \R^{q}} &\quad \langle C,X \rangle \label{SDPPackingExt1}\\
 \textrm{s.t.} &\quad \langle M_i,X \rangle \leq \vec{h_i}^T \vec{\lambda} + b_i, \qquad i \in [l],\nonumber\\
 & \quad X\succeq0. \nonumber
\end{align}
Assume that $C=\vec{cc^T}$ has rank $1$. If Problem~\eqref{SDPPackingExt1} and its
Lagrangian dual are feasible, i.e.\
\begin{itemize}
 \item[(i)] $\exists \vec{\lb} \in \R^q:\ H^T \vec{\lb}+\vec{b}\geq{0}$;
 \item[(ii)] $\exists \vec{\mub}\geq\vec{0}:\ \sum_i \mub_i M_i \succeq C,\ \vec{h_0}+H\vec{\mub}=\vec{0}$,
\end{itemize}
then, Problem~\eqref{SDPPackingExt1} is bounded, and its optimal value is the square
of the optimal value of the following SOCP:
\begin{align}
 \sup_{\vec{x}\in \R^{n},\ \vec{\lambda}\in\R^{q}} &\quad \vec{c}^T \vec{x} \label{SOCPPackingExt1}\\
 \textrm{s.t.} &\quad \left\Vert \vectwo{2A_i \vec{x}}{\vec{h_i}^T \vec{\lambda} + b_i-1} \right\Vert_2 \leq \vec{h_i}^T \vec{\lambda}
+ b_i+1, \qquad i \in [l],\nonumber
\end{align}
where the matrices $A_i$ are such that $M_i=A_i^T A_i$. Moreover, if $(\vec{x},\vec{\lambda})$ is a solution of Problem~\eqref{SOCPPackingExt1},
then $(\vec{x x^{T}},\vec{\lambda})$ is a solution of Problem~\eqref{SDPPackingExt1}, and the optimal value of~\eqref{SDPPackingExt1} is
$(\vec{c}^T \vec{x})^2$.
\end{corollary}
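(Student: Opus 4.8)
The plan is to derive Corollary~\ref{coro:packingExtSol1} from Theorem~\ref{theo:SDPSol1Ext} in essentially the same way Corollary~\ref{coro:SOCP} was derived from Theorem~\ref{theo:SDPSol1}, so the argument splits into three parts: checking that the hypotheses of Theorem~\ref{theo:SDPSol1Ext} apply (boundedness and existence of a rank-one solution), performing the Schur-complement rewriting of the constraints as second-order cone constraints, and finally transferring optimality back and forth between the SDP and the SOCP.

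First I would observe that Problem~\eqref{SDPPackingExt1} is a special case of~\eqref{SDPPackingExt} with $p=0$ (no $Y$ variable), $\vec{h_0}=\vec{0}$ and all $R_i=0$; thus the dual~\eqref{SDPPackingDualExt} reduces to minimizing $\vec{b}^T\vec{\mu}$ over $\vec\mu\ge\vec0$ subject to $\sum_i\mu_i M_i\succeq C$ and $H\vec\mu=\vec0$ — which is exactly condition~(ii). Since the non-affine constraints here are just $\sum_i\mu_i M_i\succeq C$, the refined Slater condition of Theorem~\ref{theo:SDPSol1Ext} is \emph{not} automatically implied by~(ii); however, for the purposes of this corollary we only need boundedness and a rank-one optimal solution. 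Boundedness follows from weak duality the moment~(ii) holds (any dual-feasible $\vec{\mub}$ gives the bound $\langle C,X\rangle\le\vec b^T\vec{\mub}$ on every primal-feasible $X$), and primal feasibility is~(i) applied with $X=0$. To obtain an actual rank-one optimal solution one perturbs: replacing $C$ by $C$ and each $M_i$ by $M_i+\epsilon I$ (or, more carefully, passing to the reduced problem of Proposition~\ref{prop:reduction} adapted to the combined setting) makes the dual strictly feasible while keeping $\rank C=1$, applies Theorem~\ref{theo:SDPSol1Ext} to get $X_\epsilon=\vec{x}_\epsilon\vec{x}_\epsilon^T$, and then takes a limit $\epsilon\to0$ using boundedness to extract a convergent subsequence whose limit is rank at most one and optimal. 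Alternatively, and more cleanly, once the SOCP equivalence below is established one simply invokes the fact that the SOCP attains its optimum whenever it is feasible and bounded, and reads off a rank-one SDP-optimal $\vec{x x^T}$ directly.

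Next comes the SOCP reformulation, which is a routine Schur-complement computation. Writing $X=\vec{x x^T}$ and $M_i=A_i^TA_i$, the objective $\langle C,X\rangle$ becomes $(\vec c^T\vec x)^2$ and each packing constraint becomes $\|A_i\vec x\|_2^2\le\vec{h_i}^T\vec\lambda+b_i$. The identity
\begin{equation*}
\|A_i\vec x\|_2^2\le t \iff \twotwomat{t I}{A_i\vec x}{(A_i\vec x)^T}{t}\succeq 0 \ (\text{for } t\ge 0)\iff \left\Vert\vectwo{2A_i\vec x}{t-1}\right\Vert_2\le t+1
\end{equation*}
with $t=\vec{h_i}^T\vec\lambda+b_i$ turns the $i$-th constraint into exactly the second-order cone constraint displayed in~\eqref{SOCPPackingExt1}; note the right-hand side $t+1$ is automatically nonnegative on the feasible set, so the equivalence is genuine. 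Since $\vec x$ and $-\vec x$ give the same $X$, we may assume $\vec c^T\vec x\ge 0$ and drop the square, so the optimal value of~\eqref{SDPPackingExt1} equals $(\vec c^T\vec x^*)^2$ where $\vec x^*$ solves~\eqref{SOCPPackingExt1}. The last step is the two-way transfer: a feasible $(\vec x,\vec\lambda)$ of the SOCP yields a feasible $(\vec{x x^T},\vec\lambda)$ of the SDP with the same (squared) value, giving $\mathrm{val}\eqref{SDPPackingExt1}\ge(\text{val}\eqref{SOCPPackingExt1})^2$; the rank-one optimal solution produced above (or, in the combined setting, a rank-one near-optimal sequence) gives the reverse inequality.

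The main obstacle is the boundedness/attainment step: unlike the pure packing case of Corollary~\ref{coro:SOCP}, the presence of the free variable $\vec\lambda$ means condition~(ii) gives only weak-duality boundedness of the objective, and — as Example~\ref{ex:AsyOpt} shows — the supremum in a combined problem need not be attained even when a dual-feasible point exists. What rescues the corollary is that here all $R_i=0$ and $\vec h_0=\vec0$, so after eliminating $\vec\lambda$ the problem is governed entirely by the packing structure in $X$; the correct way to make this rigorous is to run the Proposition~\ref{prop:reduction}-style projection argument in the combined setting (this is exactly what Appendix~A does for Theorem~\ref{theo:SDPSol1Ext}) to reduce to a strictly-dual-feasible instance, apply Theorem~\ref{theo:SDPSol1Ext} there, and lift back. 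I would therefore structure the proof so that the only non-bookkeeping content is the verification that conditions~(i)--(ii) plus $R_i\equiv0,\vec h_0=\vec0$ place us in the hypotheses of Theorem~\ref{theo:SDPSol1Ext} after such a reduction; everything else is the Schur-complement manipulation above.
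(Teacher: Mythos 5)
Your Schur-complement rewriting, the sign argument for removing the square, primal feasibility via $X=0$, and boundedness via weak duality all match what is needed (and the algebraic part coincides with the paper's). The gap is in the step where you manufacture a rank-one \emph{optimal} solution. Your main route perturbs $M_i\to M_i+\epsilon I$, applies Theorem~\ref{theo:SDPSol1Ext} to get $X_\epsilon=\vec{x}_\epsilon\vec{x}_\epsilon^T$, and then "takes a limit using boundedness to extract a convergent subsequence". But under hypotheses (i)--(ii) only the \emph{objective values} are bounded, not the variables: the free variable $\vec{\lambda}$ (and with it $X$) can be forced to infinity along any maximizing sequence, which is exactly the phenomenon of Example~\ref{ex:AsyOpt} and the reason the paper needs the asymptotic Theorem~\ref{theo:SDPSol1Asy} at all. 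In the proof of Theorem~\ref{theo:SDPSol1Ext} the boundedness of $(X^\varepsilon,Y^\varepsilon,\vec{\lambda}^\varepsilon)$ is extracted from the \emph{strict} dual feasibility of the unperturbed problem (plus the lineality-space reduction of Lemma~\ref{lem:bipositiflemma}, which itself presupposes the refined Slater condition to guarantee a primal optimum exists); none of this is available under (ii) alone, so your limiting argument does not go through, and indeed the attainment you are trying to prove is not claimed by the corollary and need not hold for combined problems. Your "cleaner" alternative is also unsound: a feasible and bounded SOCP need not attain its optimum (e.g.\ minimizing $y$ over the SOC-representable set $xy\geq 1$, $x,y\geq 0$), so you cannot "read off" a rank-one SDP-optimal $\vec{x}\vec{x}^T$ that way; this is why the corollary's last assertion is deliberately conditional on the SOCP having a solution.

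The fix is the tool you only mention parenthetically: Theorem~\ref{theo:SDPSol1Asy}, which requires exactly feasibility of~\eqref{SDPPackingExt1} and of its dual (your (i)--(ii) with $R_i=0$, $\vec{h_0}=\vec{0}$) and yields a sequence of feasible pairs $(\vec{x}_k\vec{x}_k^T,\vec{\lambda}_k)$ whose values $(\vec{c}^T\vec{x}_k)^2$ converge to the optimum. This shows the optimal value of the SDP equals the supremum of $(\vec{c}^T\vec{x})^2$ over rank-one-feasible pairs, after which your hyperbolic-to-SOC rewriting and the $\pm\vec{x}$ symmetry finish the proof --- this is precisely the paper's argument. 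Note that Theorem~\ref{theo:SDPSol1Asy} is itself nontrivial (the paper proves it by adding a trace constraint $\eta(\trace X+\trace Y)\leq 1$, whose multiplier restores strict dual feasibility, and then letting $\eta\to 0$ on the \emph{values}, not the variables), so the near-optimal rank-one sequence cannot simply be asserted; it must be invoked from that theorem or reproved by such a device.
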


\begin{proof}
Theorem~\ref{theo:SDPSol1Asy} guarantees the existence of a sequence of
feasible variables $(X_k,\vec{\lambda}_k)_{k\in{\mathbb{N}}}$ in which  
$X_k$ has rank $1$, i.e.\ $X_k=\vec{x_k x_k}^{T}$, and $\langle C,X_k \rangle=(\vec{c}^T\vec{x}_k)^2$
converges to the optimum of Problem~\eqref{SDPPackingExt1}. This optimal value is therefore equal to
the supremum of $(\vec{c}^T\vec{x})^2$, over all the pairs of vectors
$(\vec{x},\vec{\lambda})\in\R^n\times\R^q$ such that $(\vec{xx}^T,\vec{\lambda})$ is feasible
for Problem~\eqref{SDPPackingExt1}. As in the proof of Corollary~\ref{coro:SOCP}, we notice that
if  $(\vec{x}\vec{x}^T,\vec{\lambda})$ is feasible for Problem~\eqref{SDPPackingExt1}, so is 
$((-\vec{x})(-\vec{x})^T,\vec{\lambda})$, hence we can remove the  square in the objective function.

The SOCP~\eqref{SOCPPackingExt1} is simply obtained from~\eqref{SDPPackingExt1} by substituting $\vec{xx^T}$ from $X$
and $A_i^TA_i$ from $M_i$. We also used the fact that for any vector $\vec{z}$ and for any scalar $\alpha$, the hyperbolic constraint
$$\Vert \vec{z} \Vert_2^2 \leq \alpha$$
is equivalent to the second order cone constraint
$$\left\Vert \vectwo{2\vec{z}}{\alpha-1} \right\Vert_2 \leq \alpha+1.$$
\end{proof}

\paragraph{Application: $\vec{c}-$optimal design of experiments with multiple resource constraints}
~\\
In a more general setting than the classical $\vec{c}-$optimal design problem~\eqref{cOpt} presented in the previous section,
$\vec{w}$ no longer represents the percentage of experimental effort to spend on each experiment, but describes some resource
allocation to the available experiments, that is subject to multiple linear constraints $P\vec{w}\leq\vec{d}$,
where $P$ is a $q\times l$ matrix with nonnegative entries
and $\vec{d}$ is a $q\times1$ vector. This problem arises for example
in a network-wide optimal sampling problem~\cite{SagnolGB10ITC}, where $\vec{w}$ is the vector of the sampling rates of the monitoring devices
on all links of the network, and is subject to linear constraints that limit the overhead of the routers.
We will show that this
problem is a ``combined'' semidefinite packing problem which reduces to an SOCP. The resource constrained $\vec{c}-$optimal design
problem reads as follows:
\begin{align}
 \inf_{\vec{w}\geq\vec{0}}\quad& \vec{c^T} (\sum_i  w_i M_i)^\dagger \vec{c}  \label{cOptExt}\\
\textrm{s.t.}\quad&P\vec{w}\leq\vec{d}. \nonumber
\end{align}
We assume that the optimal design problem is feasible, i.e.\ there exists a vector $\vec{\hat{w}}\geq\vec{0}$ such that
$P\vec{\hat{w}}\leq\vec{d}$ and $\vec{c}$ is
in the range of $\sum_i \hat{w}_i M_i$. Note that we can assume without loss of generality that $\vec{\hat{w}}>\vec{0}$. Otherwise, this would mean
that the constraints $P\vec{w}\leq\vec{d},\ \vec{w}\geq\vec{0}$ force the equality $w_i=0$ to hold for some coordinate $i\in[l]$,
and in this case we could simply remove the experiment $i$ from the set of available experiments.

We can now express the latter problem as an SDP thanks to the Schur complement lemma:
\begin{align}
\inf_{t\in \R,\ \vec{w}\geq\vec{0}} &\quad t \label{SDPPackingDualSC}\\
\textrm{s.t.} &\quad \left( \begin{array} {c|c} \sum_i w_i M_i & \vec{c}\\
                      			\hline
				 \vec{c}^{{}_T} & t
                     \end{array} \right) \succeq 0. \nonumber \\
& \quad P \vec{w} \leq \vec{d}.\nonumber
\end{align}
Since the optimal $t$ is positive (we exclude the trivial case $\vec{c}=\vec{0}$), the latter matrix inequality may be rewritten as
$$\sum_i w_i M_i \succeq \frac{\vec{cc^T}}{t},$$
by using the Schur complement lemma again. Finally, we make the change of variables $\vec{\mu}=t\vec{w}$
and Problem~\eqref{SDPPackingDualSC} is equivalent to
\begin{align}
\inf_{\vec{\mu}\geq\vec{0},t\geq0} &\quad t \label{SDPPackingDual3}\\
\textrm{s.t.} &\quad \sum_{i=1}^l \mu_i M_i \succeq \vec{cc^T}\nonumber \\
& \quad P\vec{\mu} \leq t\vec{d}.\nonumber
\end{align}

This problem is exactly in the form of Problem~\eqref{SDPPackingDualExt},
for  $C=\vec{cc^T}$, $\mu_{l+1}=t,\ \vec{b}=[0,\ldots,0,1]^T \in \R^{l+1},\ M_{l+1}=0,\ \vec{h_0}=\vec{0},\ H=[P,-\vec{d}] $, 
and for all $i\in 0,\ldots,l+1,\ R_i=0$ (we also need to introduce a nonnegative slack variable to handle the inequalities as equalities).

Let $\lambda:=\vec{c}^T (\sum_i M_i)^\dagger \vec{c}^T$, so that $\lambda\sum_i M_i \succeq \vec{cc}^T$.
We set $\overline{t}=\max_{i\in[l]} (\lambda/\hat{w}_i)$
($\overline{t}$ is well defined because $\vec{\hat{w}}>\vec{0}$). The vector
$\vec{\mub}:=\overline{t} \vec{\hat{w}}$ is dual feasible, because
$P\vec{\mub}\leq \overline{t} \vec{d},$ and
$\sum_{i=1}^l \mub_i M_i \succeq \lambda  \sum_{i=1}^l M_i \succeq \vec{cc^T}.$
In addition,
the corresponding primal problem is clearly feasible (for $\vec{\lambda}=\vec{0}$, since $\vec{b}\geq\vec{0}$),
and thus we can use Corollary~\ref{coro:packingExtSol1}: the $\vec{c}-$optimal design problem with resource constraints~\eqref{cOptExt}
reduces to the SOCP~\eqref{SOCPPackingExt1}. 
We give below this SOCP (with the parameters $\vec{b}$, $M_i$, $H$ and the slacks defined as above),
as well as its dual:

\noindent\ \begin{minipage}[t]{0.49\textwidth}
 \begin{align*}
  \sup_{\raisebox{-0.1cm}[0cm][-0.2cm]{\ensuremath{\substack{\vec{x}\in\R^n\\ {\vec{\lambda}\in\R^q}}}}}
&\quad \vec{c}^T \vec{x}\\
&\quad  \left\Vert \vectwo{2A_i \vec{x}}{\vec{p_i}^T \vec{\lambda} -1} \right\Vert_2 \leq \vec{p_i}^T \vec{\lambda} +1\quad (\forall i\in[l]),\\
&\quad \vec{d}^T \vec{\lambda}\leq 1,\\
& \quad \vec{\lambda}\geq\vec{0}.
 \end{align*}
\end{minipage}
\begin{minipage}[t]{0.49\textwidth}
 \vspace{-0.4cm}
 \begin{align*}
\qquad \inf_{\raisebox{-0.3cm}[0cm][-0.2cm]{\ensuremath{\substack{\vec{\mu}\geq\vec{0},t \geq0\\\vec{\alpha}\geq\vec{0}, (\vec{z_i})_{i\in[l]}}}}}
&\quad \sum_{i=1}^l \alpha_i +t\\
&\quad \sum_{i=1}^l A_i^T \vec{z_i}=\vec{c},\\
& \quad P \vec{\mu} \leq t \vec{d},\\
& \quad  \left\Vert \vectwo{\vec{z_i}}{\alpha_i-\mu_i} \right\Vert_2 \leq \alpha_i+\mu_i\\
& \quad \phantom{\left\Vert \vectwo{\vec{z_i}}{\alpha_i-\mu_i} \right\Vert_2 \leq}
\raisebox{0.4cm}[0cm][0cm]{\ensuremath{(\forall i \in [l]),}}
 \end{align*}
\end{minipage}

\vspace{0.3cm}
\noindent where the vectors $\vec{p_1},\ldots,\vec{p_l}\in\R^q$ are the columns of the matrix $P$, and
for all $i\in[l]$, $A_i$ is such that $A_i^TA_i=M_i$. The dual problem satisfies the (refined)
Slater condition,
because $\vec{c}\in\range (\sum_i M_i)=\sum_i \range(A_i^T)$, so that
$\exists \vec{\zb_1},\ldots,\vec{\zb_l}: \sum_{i=1}^l A_i^T \vec{\zb_i}=\vec{c}$,
$P \vec{\mub} \leq \overline{t} \vec{d}$ and for $\vec{\overline{\alpha}}>\vec{0}$ large enough, the non-affine cone
constraints are satisfied with a strict inequality. Hence, strong duality holds and the values of these
two problems are equal.
By construction, the optimal design variable $\vec{w}$
is related to the dual optimal variables $\vec{\mu}$ and $t$ by the relation $\vec{w}=t^{-1} \vec{\mu}$.
Moreover, Corollary~\ref{coro:packingExtSol1} shows that the optimal value of Problem~\eqref{cOptExt}
is the square of the optimal value of these SOCPs.

\section{Proofs of the theorems}
\label{sec:proof}
\begin{proof}[Proof of Theorem~\ref{theo:feasibility-Boundness}]
The fact that Problem~\eqref{SDPPacking} is feasible if and only if every $b_i$ is nonnegative is clear, since $X=0$ is always feasible in this case and $M_i \succeq 0, X \succeq 0$, implies $\langle M_i,X \rangle \geq 0$.

Now, we assume that each $b_i$ is nonnegative, and we show that Problem~\eqref{SDPPacking} is bounded if
and only if $\range C \subset \range \sum_i M_i$.
The positive semidefiniteness of the matrices $M_i$ implies that there exists matrices $A_i$ ($i\in[l]$) such that
$A_i^T A_i=M_i$, and $[A_1^T,\cdots,A_l^T] [A_1^T,\cdots,A_l^T]^T=\sum_i M_i$. We also consider a decomposition
$C=\sum_{k=1}^r \vec{c_k} \vec{c_k}^T$. For any factorization $M=A^T A$ of a positive semidefinite matrix $M$,
it is known that $\range M = \range A$, and so the following
equivalence relations hold:
\begin{align}
\range C \subset \range \sum_i M_i & \Longleftrightarrow \forall k\in[r],\ \vec{c_k} \in \range(\sum_i M_i)=\range ([A_1^T,\cdots,A_l^T])  \nonumber \\
& \Longleftrightarrow \forall k\in[r],\ \vec{c_k}\in \left( \bigcap_{i=1}^l \nullspace (A_i)\right)^\bot. \label{equivRange2}
\end{align}

We first assume that the range inclusion condition does not hold. Relation~\eqref{equivRange2} shows that
$$\exists k \in [r], \exists \vec{h} \in \R^n: \forall i\in[l],\quad A_i \vec{h}=0,\quad \vec{c_k}^T \vec{h}\neq0.$$
Now, notice that $X=\alpha \vec{h} \vec{h}^T$ is feasible for all $\alpha>0$, since $\alpha \langle A_i^T A_i , \vec{h} \vec{h}^T \rangle=0\leq b_i$.
This contradicts the fact that Problem~\eqref{SDPPacking} is bounded, because $\langle C,X \rangle \geq \alpha (\vec{c_k}^T \vec{h})^2,$ and $\alpha$
can be chosen arbitrarily large.

Conversely, if the range inclusion holds, we consider the Lagrangian dual~\eqref{SDPPackingDual}
of Problem~\eqref{SDPPacking}:
The range inclusion condition indicates that this problem is feasible, because it implies the existence of a scalar $\lambda>0$ such
that $\lambda \sum_i M_i \succeq C$ (we point out that a convenient value for
$\lambda$ is $\sum_{k=1}^r \vec{c_k}^T (\sum_i M_i)^\dagger \vec{c_k}$; this can  be seen with the help of the Schur complement lemma).
This means that Problem~\eqref{SDPPackingDual} has a finite optimal value $OPT\leq \lambda \sum_i b_i$, and by weak duality,
Problem~\eqref{SDPPacking} is bounded (its optimal value cannot exceed $OPT$).
%
%
%
\end{proof}

Before proving Theorem~\ref{theo:SDPSol1}, we need to show that we can project Problem~\eqref{SDPPacking}
on a subspace such that the projected problem~\eqref{Pprime} and its Lagrangian dual are
strictly feasible (Proposition~\ref{prop:reduction}).

\begin{proof}[Proof of Proposition~\ref{prop:reduction}]

Let $\mathcal{I}_0, \mathcal{I}, U$ and $V$ be defined as in the paragraph
preceding the statement of the proposition. 
Note that every matrix $M_i$
can be decomposed as $M_i=U \tilde{M}_i U^T$ for a given matrix $\tilde{M}_i$, because its range is included in the range
of $\sum_i M_i$ (we have $\tilde{M}_i=U^T M_i U$).
The same observation holds for $C$, which can be decomposed as $C=U \tilde{C} U^T$ (we have assumed the range inclusion 
$\range C \subset \range \sum_i M_i$).
 Hence, Problem~\eqref{SDPPacking} is equivalent to:
\begin{align*}
 \max_{X\succeq 0} &\quad \langle \tilde{C}, U^T X U \rangle \\
 \textrm{s.t.} &\quad \langle \tilde{M_i} , U^T X U\rangle \leq b_i, \qquad i \in [l].
\end{align*}
After the change of variable $Z_0=U^T X U$ ($Z_0$ is a positive semidefinite matrix
if $X$ is), we obtain a reduced semidefinite packing problem
\begin{align}
 \max_{Z_0\succeq 0} &\quad \langle \tilde{C}, Z_0 \rangle \label{SDPPackingsumMi}\\
 \textrm{s.t.} &\quad \langle \tilde{M_i} , Z_0\rangle \leq b_i, \qquad i \in [l].\nonumber
\end{align}
By construction, if $Z_0$ is a solution of~\eqref{SDPPackingsumMi}, then
$X:=UZ_0U^T$ is a solution of~\eqref{SDPPacking}.
Note that the  projected matrices in the constraints now
satisfy $\sum_i \tilde{M_i}=U^T (\sum_i M_i) U\succ0$.

We shall now consider a second projection, in order to get rid of the constraints
in which $b_i=0$. Note that each constraint indexed by $i\in\mathcal{I}_0$ is equivalent to
imposing that $Z_0$ belong to the nullspace of the matrix $\tilde{M}_i$. Since the columns of $V$
form a basis of $\cap_{i\in\mathcal{I}_0} \nullspace \tilde{M}_i$, any semidefinite matrix
$Z_0$ which is feasible for Problem~\eqref{SDPPackingsumMi} must be of the form $V Z V^T$
for some positive semidefinite matrix $Z$.
Hence, Problem~\eqref{SDPPackingsumMi} reduces to:
\begin{align}
  \max_{Z \succeq 0} &\quad \langle V^T \tilde{C} V ,Z \rangle \label{SDPPackingbi}\\
  \textrm{s.t.} &\quad \langle V^T \tilde{M}_i V ,Z \rangle \leq b_i, \qquad i \in \mathcal{I}.\nonumber
\end{align}
which is nothing but Problem~\eqref{Pprime}, because $V^T\tilde{M}_i V=V^TU^TM_iUV=M_i'$
and  $V^T\tilde{C} V=C'$. By construction,
If $Z$ is a solution of~\eqref{SDPPackingbi}$\equiv$\eqref{Pprime}, then $VZV^T$ is a solution of ~\eqref{SDPPackingsumMi},
and $(UV)Z(UV)^T$ is a solution of the original problem~\eqref{SDPPacking}. This proves the point $(iii)$
of the proposition.

We have pointed out above that  $\sum_i \tilde{M_i}\succ0$. Therefore, there exists 
a real $\lambda>0$ such that $\lambda \sum_i \tilde{M}_i \succ \tilde{C}$, and 
$\lambda \sum_i M_i' = V^T \big(\lambda  \sum_i \tilde{M}_i\big) V \succ V^T \tilde{C} V = C'$. This proves the strict
dual feasibility of Problem~\eqref{Pprime} (point $(ii)$ of the proposition).
Finally, since every $b_i$ is positive for $i\in\mathcal{I}$, it is clear that the matrix $\overline{Z}=\varepsilon I \succ0$
is strictly feasible for Problem~\eqref{Pprime} as soon as $\varepsilon>0$ is sufficiently small. This establishes
the point $(i)$, and the proposition is proved.

\end{proof}

We can now prove the main result of this article.
We will first show that the result holds when every $M_i$ is positive definite, thanks to the complementary slackness relation. Then, 
the general result is
obtained by continuity. We point out at the end of this section the sketch of an alternative
proof of Theorem~\ref{theo:SDPSol1} for the case in which $r=1$, based on the bidual of Problem~\eqref{SDPPacking} and Schur complements,
that shows directly that Problem~\eqref{SDPPacking} reduces to the SOCP~\eqref{SOCPPacking}.

\begin{proof}[Proof of Theorem~\ref{theo:SDPSol1}]
We will show that the result of the theorem holds for any semidefinite packing problem which
is strictly feasible, and whose dual is strictly feasible. Then, by Proposition~\ref{prop:reduction},
we can say that Problem~\eqref{Pprime} has a solution $Z$ of rank at most $r':=\rank\ C'$, and
$X:=(UV)^TZ(UV)$ is a solution of the original problem which is of rank at most $r'\leq r$.

So let us assume without loss of generality that~\eqref{SDPPacking} and~\eqref{SDPPackingDual}
are strictly feasible:
$$\forall i\in[l], b_i>0\quad\textrm{and}\quad \exists \lambda>0: \lambda \sum_i M_i \succ C.$$
The Slater condition is fulfilled for this pair of programs, and so strong duality holds (the optimal
value of~\eqref{SDPPacking} equals the optimal value of~\eqref{SDPPackingDual}, and the dual problem attains
its optimum. In addition, the strict dual feasibility implies that~\eqref{SDPPacking} also attains its
optimum. The pairs of primal and dual solutions $(X^*,\vec{\mu}^*)$ are characterized
by the Karush-Kuhn-Tucker (KKT) conditions:
\begin{align*}
\textrm{Primal Feasibility:} &\qquad \forall i \in [l],\quad \langle M_i, X^* \rangle \leq b_i;\\
& \qquad X^*\succeq 0;\\
\textrm{Dual Feasibility:} &\qquad \vec{\mu^*}\geq0,\quad  \sum_{i=1}^l \mu_i^* M_i  \succeq C;\\
\textrm{Complementary Slackness:} &\qquad (\sum_{i=1}^l \mu_i^* M_i - C)\ X^*=0,\\
&\qquad \forall i\in[l],\ \mu_i^*(b_i-\langle M_i, X^*\rangle)=0.
\end{align*}

Now, we consider the case in which $M_i\succ0$ for all $i$, and we choose an arbitrary pair of primal and dual optimal solutions
$(X^*,\vec{\mu}^*)$. The dual feasibility relation implies $\vec{\mu}^*\neq \vec{0}$, and so
$\sum_i \mu_i^* M_i$ is a positive definite matrix (we exclude the trivial case $C=0$). Since $C$ is of rank $r$,
we deduce that $$\operatorname{rank}(\sum_i \mu_i^* M_i - C) \geq n-r.$$
Finally, the complementary slackness relation indicates that the columns of $X^*$ belong to the nullspace of $(\sum_i \mu_i^* M_i-C)$, which is a vector space of dimension at most $n-(n-r)=r$,
and so we conclude that $\operatorname{rank} X^* \leq r.$\par
\vspace{12pt}

We now turn to the study of the general case in which $M_i\succeq0$.
 To this end, we consider the perturbed problems
\begin{align}
  \max &\quad \langle C,X \rangle  \nonumber \\
\textrm{s.t.}&\quad \langle M_i + \varepsilon I,X \rangle \leq b_i \label{Pepsilon} \tag{$P_\varepsilon$} \\
&\quad X \succeq 0,\nonumber
\end{align}
and
\begin{align}
 \min_{\vec{\mu}\geq0} &\quad \sum_{i=1}^l \mu_i b_i, \label{Depsilon} \tag{$D_\varepsilon$}\\
 \textrm{s.t.} &\quad  \sum_{i=1}^l \mu_i (M_i+\varepsilon I) \succeq C \nonumber.
\end{align}
where $\varepsilon\geq0$. Note that the strict feasibility of the unperturbed problems~\eqref{SDPPacking}
and~\eqref{SDPPackingDual} implies that of~\eqref{Pepsilon} and~\eqref{Depsilon} on a neighborhood $\varepsilon \in [0,\varepsilon_0]$,
$\varepsilon_0>0$. We denote by $(X^\varepsilon,\vec{\mu^\varepsilon})$
a pair of primal and dual solutions of~\eqref{Pepsilon}--\eqref{Depsilon}.

If $\varepsilon>0$,  $M_i + \varepsilon I \succ 0$ and it follows
from the previous discussion that $X^\varepsilon$ is of rank at most $r$.
We show below that we can choose the optimal variables
$(X^\varepsilon, \vec{\mu^\varepsilon})_{\varepsilon \in ]0,\varepsilon_0]}$
within a bounded region, so that we can construct a converging
subsequence $(X^{\varepsilon_k},\vec{\mu^{\varepsilon_k}})_{k \in \mathbb{N}},\ \varepsilon_k\to0$
from these variables.
To conclude, we will see that the limit $(X^0,\vec{\mu^0})$ satisfies the
KKT conditions for Problems~\eqref{SDPPacking}--\eqref{SDPPackingDual},
and that $X^0$ is of rank at most~$r$.

Let us denote the optimal value of Problems~\eqref{Pepsilon}--\eqref{Depsilon} by $OPT(\varepsilon).$ Since the constraints
of the primal problem becomes tighter when $\varepsilon$ grows, it is clear that $OPT(\varepsilon)$ is nonincreasing
with respect to $\varepsilon$,
so that $$\forall \varepsilon \in [0,\varepsilon_0],\  OPT(\varepsilon_0)\leq OPT(\varepsilon)\leq OPT(0).$$

We have:
$$\lambda (\sum_i M_i + \varepsilon I) -C \succ \lambda (\sum_i M_i) -C ,$$
and so we can write
\begin{align*}
 \langle \lambda \sum_i M_i -C, X^\varepsilon \big\rangle
& \leq
\langle \lambda \sum_i (M_i+\varepsilon I) -C, X^\varepsilon \big\rangle\\
 &=\lambda \langle \sum_i (M_i+\varepsilon I) , X^\varepsilon \big\rangle   
-OPT(\varepsilon)\\
& \leq  \lambda \sum_i b_i -OPT(\varepsilon_0)
\end{align*}
where the equality comes from the expression of $OPT(\varepsilon)$ and the latter inequality
follows from the constraints of the Problem~\eqref{Pepsilon}. The matrix $\lambda \sum_i M_i -C$ is positive definite
by assumption and its smallest eigenvalue $\lambda'$ is therefore positive. Hence,
$$\lambda'\ \trace\ X^\varepsilon \leq \langle \lambda \sum_i M_i -C, X^\varepsilon \big\rangle \leq \vec{\mub}^T \vec{b} -OPT(\varepsilon) \leq \lambda \sum_i b_i -OPT(\varepsilon_0).$$
This shows that the positive semidefinite matrix $X^\varepsilon$ has its trace bounded, and therefore
all its entries are bounded.

It remains to show that the dual optimal variable $\vec{\mu^\varepsilon}\geq\vec{0}$ is bounded.
This is simply done by writing:
$$ \forall i\in[l],\quad b_i \mu_i^\varepsilon \leq \vec{b}^T \vec{\mu^\varepsilon} = OPT(\varepsilon) \leq OPT(0).$$
By assumption, $b_i>0$, and the entries of the vector $\vec{\mu^\varepsilon}\geq \vec{0}$ are bounded.

\sloppypar{~\\ \indent We can therefore construct a
sequence of pairs of primal and dual optimal solutions
$(X^{\varepsilon}, \vec{\mu^{\varepsilon_k}})_{k \in \mathbb{N}}$
that converges, with $\varepsilon_k \underset{k \to \infty}{\longrightarrow} 0$, $\varepsilon_k>0$.}
The limit $X^0$ of this sequence is of rank at most $r$, because the rank is a lower semicontinuous function and $\rank\ X^{\varepsilon_k}\leq r$
for all $k\in\mathbb{N}$.
It remains to show that $X^0$ is a solution of Problem~\eqref{SDPPacking}. The $\varepsilon-$perturbed KKT conditions must
hold for all $k\in\mathbb{N}$, and so they hold for the pair $(X_0, \vec{\mu^{0}})$
by taking the limit (the limit of any sequence of positive semidefinite matrices is a positive semidefinite matrix because $\mathbb{S}_n^+$ is closed).
This concludes the proof.

\end{proof}

\paragraph{Sketch of an alternative proof of Theorem~\ref{theo:SDPSol1} when \boldmath $r=1$ \unboldmath}
~\\
By Proposition~\ref{prop:reduction}, we only need to show that the result holds for the reduced
problem~\eqref{Pprime}, and so we assume without loss of generality that strong duality holds for
all the optimization problems considered below.

When $r=1$, there is a vector $\vec{c}$ such that $C=\vec{cc^T}$ and the dual problem of~\eqref{SDPPacking} takes the form:
 \begin{align}
 \min_{\vec{\mu}\geq0} &\quad \vec{\mu^T b} \label{SDPPackingDualc}\\
 \textrm{s.t.} &\quad \vec{cc^T} \preceq \sum_i \mu_i M_i. \nonumber
\end{align}
Now, setting $t=\vec{\mu^T b}$, and $\vec{w}=\frac{\vec{\mu}}{t}$, so that the new variable $\vec{w}$ satisfies
$\vec{w^Tb}=1$, the constraint of the previous problem
becomes $\frac{\vec{cc^t}}{t} \preceq \sum_i w_i M_i$. This matrix inequality, together with the fact that the optimal $t$ is positive, 
can be reformulated thanks to the Schur
complement lemma, and~\eqref{SDPPackingDualc} is equivalent to:
 \begin{align}
 \min_{t\in \R,\vec{w}\geq\vec{0}} &\quad t \label{SDPPackingDual2}\\
 \textrm{s.t.} &\quad \left( \begin{array} {c|c} \sum_i w_i M_i & \vec{c}\\
                       			\hline
					 \vec{c^{{}_T}} & t
                      \end{array} \right) \succeq 0. \nonumber \\
&\quad \vec{w^Tb}=1. \nonumber
\end{align}
We dualize this SDP once again to obtain the bidual of Program~\eqref{SDPPacking} (strong duality holds):
 \begin{align}
 \max_{\beta \in \R,Z \in \mathbb{S}^+_{n+1}} &\quad -\beta-2\vec{v^Tc} \label{SDPbidual}\\
 \textrm{s.t.} &\quad \langle W, M_i \rangle \leq \beta b_i, \quad i\in[l]  \nonumber \\
&\quad Z=\left( \begin{array} {c|c} W & \vec{v}\\
                     \hline
			\vec{v^{{}_T}} & 1
                      \end{array} \right) \succeq 0.
 \nonumber
\end{align}
We notice that the last matrix inequality is equivalent to $W \succeq \vec{v v^T}$, using a Schur complement.
Since $M_i\succeq0$, we can assume that $W=\vec{vv^T}$ without loss of generality, and~\eqref{SDPbidual} becomes:
 \begin{align}
 \max_{\beta \in \R,\vec{v} \in \R^n} &\quad -\beta-2\vec{v^Tc} \label{SOCPbidual}\\
 \textrm{s.t.} &\quad \Vert A_i \vec{v} \Vert^2 \leq \beta b_i, \quad i=1\in[l],  \nonumber
\end{align}
where $A_i$ is a matrix such that $A_i^TA_i=M_i$.

We now define the new variables $\alpha=\sqrt{\beta}$, and
$\vec{x}=\frac{\vec{v}}{\alpha}$, so that \eqref{SOCPbidual} becomes:
 \begin{align}
 \max_{\vec{x} \in \R^n}  &\quad \left( \max_\alpha -\alpha^2-2\alpha \vec{x^Tc} \right) \label{SOCPfenchel}\\
 \textrm{s.t.} &\quad \Vert A_i \vec{x} \Vert \leq \sqrt{b_i}, \quad i=1\in[l].  \nonumber
\end{align}
The reader can finally verify that the value of the max within parenthesis is $(\vec{c^T x})^2$, and we have proved
that the SDP~\eqref{SDPPacking} reduces to the SOCP~\eqref{SOCPPacking}.
By the way, this guarantees that the SDP~\eqref{SDPPacking} has a rank-one solution.\qed

\section*{Acknowledgment}
The author thanks St\'ephane Gaubert for his useful comments and enlightening
discussions, as well as for his warm support. He also expresses his gratitude to
two anonymous referees. In a previous version, the main result was restricted to the case in which $r=1$.
One referee suggested an alternative proof with an elegant complementary slackness argument, which led to
the more general statement of Theorem~\ref{theo:SDPSol1}.
The author also thanks a second referee for his useful remarks, which helped to clarify the
consequences and the presentation of these results.


\clearpage

\appendix
\numberwithin{equation}{section}
\section{Proof of Theorem~\ref{theo:SDPSol1Ext}}
Before we give the proof of Theorem~\ref{theo:SDPSol1Ext}, we need one additional technical lemma,
which shows that one can assume without loss of generality that the
primal problem is strictly feasible, and that the vector space spanned by the
vectors $\vec{h_0},\vec{h_1},\ldots,\vec{h_l}$ coincides with the cone
generated by the same vectors. One can consider this lemma as
the analog of Proposition~\ref{prop:reduction} for combined problems.
\begin{lemma}
 \label{lem:bipositiflemma}
We assume that the conditions of Theorem~\ref{theo:SDPSol1Ext} are fulfilled.
Then, there exists a subset $\mathcal{I}\subset[l]$, as well as matrices
$C'\succeq0$ and $M_i'\succeq0$ ($i\in\mathcal{I}$), so that the reduced ``combined'' semidefinite packing problem 
$$\max_{Z\succeq0,\ Y\succeq0,\ \vec{\lambda}}\ \langle C', Z \rangle + \langle R_0, Y \rangle + \vec{h_0}^T\vec{\lambda} 
\qquad \textrm{s.t.}\qquad \forall i \in \mathcal{I},\
\langle M_i', Z \rangle \leq b_i + \langle R_i, Y\rangle + \vec{h_i}^T\vec{\lambda}$$
has the same optimal value as~\eqref{SDPPackingExt} and satisfies the following properties:
\begin{itemize}
\item[$(i)$] $\exists (Z'\succ0, Y' \succ0, \vec{\lambda'}):\ \forall i \in \mathcal{I},\ \langle M_i, Z'\rangle < b_i + \langle R_i, Y' \rangle + \vec{h_i}^T \vec{\lambda'}$;
\item[$(ii)$] The cone $K$ generated by the vectors $(\vec{h_i})_{i\in\{0\}\cup\mathcal{I}}$ is a vector space.
\item[$(iii)$] $\rank\ C' \leq \rank\ C$;
\item[$(iv)$] There is a matrix $U$ with orthonormal columns such that if $(Z,Y,\vec{\lambda})$ is a
solution of the reduced problem, then $(X:=UZU^T,Y,\vec{\lambda})$
is a solution of Problem~\eqref{SDPPackingExt} (which of course satisfies $\rank\ X \leq \rank\ Z$).
\end{itemize}
\end{lemma}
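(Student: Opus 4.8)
The plan is to adapt the two–step projection argument of Proposition~\ref{prop:reduction}, adding one new ingredient that produces the ``cone is a vector space'' property~$(ii)$, which has no analogue in the packing case. Fix a dual vector $\vec{\mub}$ witnessing the refined Slater condition: $\vec{\mub}\geq\vec{0}$, $\sum_i\mub_i M_i\succ C$, $R_0+\sum_i\mub_i R_i\prec 0$, $\vec{h_0}+H\vec{\mub}=\vec{0}$. I would run the reduction in two stages, the first acting on the vectors $\vec{h_i}$ and on the constraint set, the second on the matrix variable $X$, checking at each stage that feasibility, the optimal value, and the quantities in $(i)$--$(iv)$ are controlled. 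Property $(iii)$ will come for free, since restricting $X$ to a subspace can only lower $\rank C$.

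\emph{Stage~1 (securing $(ii)$).} Set $K=\operatorname{cone}(\vec{h_0},\dots,\vec{h_l})$, $K^{*}=\{\vec{d}:\vec{h_i}^{T}\vec{d}\geq 0,\ i\in\{0\}\cup[l]\}$, and $\mathcal I:=\{\,i\in[l]:\vec{h_i}\in\operatorname{lin}(K)\,\}$, where $\operatorname{lin}(K)=K\cap(-K)=(K^{*})^{\perp}$. Because $K^{*}$ is polyhedral there is one vector $\vec{d^{*}}\in K^{*}$ (a finite sum of separating directions) with $\vec{h_i}^{T}\vec{d^{*}}>0$ for $i\notin\mathcal I$ and $\vec{h_i}^{T}\vec{d^{*}}=0$ for $i\in\{0\}\cup\mathcal I$ (note $\vec{h_0}\in\operatorname{lin}(K)$, since $-\vec{h_0}=H\vec{\mub}\in K$). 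Pairing $\vec{h_0}+H\vec{\mub}=\vec{0}$ with $\vec{d^{*}}$ gives $\sum_i\mub_i(\vec{h_i}^{T}\vec{d^{*}})=0$, a sum of nonnegative terms, so $\mub_i=0$ for every $i\notin\mathcal I$; hence $\vec{\mub}$ restricted to $\mathcal I$ still witnesses the refined Slater condition of the problem obtained from~\eqref{SDPPackingExt} by deleting the constraints indexed by $[l]\setminus\mathcal I$. That deletion preserves feasibility and the optimal value: any triple feasible for the reduced set of constraints becomes feasible for all original constraints after the shift $\vec{\lambda}\mapsto\vec{\lambda}+t\vec{d^{*}}$ with $t$ large, and since $\vec{h_0}^{T}\vec{d^{*}}=0$ this shift leaves the objective unchanged. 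Finally, for $i\in\{0\}\cup\mathcal I$ one has $-\vec{h_i}\in\operatorname{lin}(K)\subseteq K$, so $-\vec{h_i}=\sum_{j\in\{0\}\cup[l]}\gamma_j\vec{h_j}$ with $\gamma_j\geq 0$; pairing once more with $\vec{d^{*}}$ forces $\gamma_j=0$ for $j\notin\mathcal I$, so $-\vec{h_i}$ is a nonnegative combination of $(\vec{h_j})_{j\in\{0\}\cup\mathcal I}$. Thus the cone generated by these vectors is stable under negating its generators, i.e.\ is a vector space, which is $(ii)$.

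\emph{Stage~2 (securing $(i)$ and the lifting map).} On the problem surviving Stage~1, $\sum_{i\in\mathcal I}\mub_i M_i\succ C\succeq 0$ yields $\range C\subseteq\range\sum_{i\in\mathcal I}M_i$; taking the columns of $U_1$ to be an orthonormal basis of that range, we get $C=U_1\tilde C U_1^{T}$, $M_i=U_1\tilde M_iU_1^{T}$, and the substitution $X=U_1ZU_1^{T}$ produces an equivalent problem with $\sum_{i\in\mathcal I}\tilde M_i\succ 0$ and $\rank\tilde C=\rank C$, in which $Z$-solutions lift to $X$-solutions. If this problem is still not strictly feasible in the sense of $(i)$, a theorem of the alternative furnishes a nontrivial $\vec{\nu}\geq\vec{0}$ with $\sum_i\nu_i\tilde M_i\succeq 0$, $\sum_i\nu_iR_i\preceq 0$, $\sum_i\nu_i\vec{h_i}=\vec{0}$, $\sum_i\nu_ib_i\leq 0$; at every feasible point this forces $\langle\tilde M_i,Z\rangle=0$ for the indices with $\nu_i>0$ and $\langle\sum_i\nu_iR_i,Y\rangle=0$, hence $(\sum_i\nu_iR_i)Y=0$, so one may restrict $Z$ to $\bigcap_{\nu_i>0}\nullspace\tilde M_i$ (updating $U_1$, which only lowers the rank of the objective matrix) and $Y$ to a face of $\mathbb{S}_p^+$, the affected constraints thereby losing their dependence on $Z$; iterating finitely often reaches strict feasibility. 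The composite orthonormal map is the $U$ of $(iv)$. Since Stage~2 never alters the $\vec{h_i}$, and restricting $X$, restricting $Y$, or eliminating an affine relation among the $\vec{\lambda}$-coordinates all send $\operatorname{cone}(\vec{h_i})$ to a linear image of itself, property $(ii)$ survives.

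The step I expect to be the main obstacle is securing strict feasibility $(i)$ while keeping the reduced problem in exactly the stated form --- same $b_i$, $R_i$, $\vec{h_i}$ for the surviving constraints, only $C$, the $M_i$, and the index set being modified. Facial reduction would in general also want to substitute out coordinates of $\vec{\lambda}$ and alter the right-hand sides, so one must argue that every such move can instead be realized by deleting constraints (absorbed into the choice of $\mathcal I$) together with the $X$-restriction alone, and that this is compatible with $(ii)$. The interplay with Stage~1 is the subtle part: a deletion of constraints may create a point feasible for the reduced problem yet infeasible for~\eqref{SDPPackingExt}, so in $(iv)$ the lifting has to be understood modulo the harmless shift $\vec{\lambda}\mapsto\vec{\lambda}+t\vec{d^{*}}$ (indeed $\vec{h_0}^{T}\vec{d^{*}}=0$); equivalently, one has to exhibit an optimal solution of the reduced problem that is feasible for all the original constraints.
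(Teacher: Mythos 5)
Your Stage~1 is essentially the paper's own treatment of property $(ii)$: the paper also decomposes $K=L+Q$ with $L=K\cap(-K)$ the lineality space, produces a vector $\vec{\lambda_0}$ (your $\vec{d^{*}}$) that vanishes on $L$ and is strictly positive on $Q\setminus\{\vec{0}\}$, observes $\vec{h_0}\in L$ because $-\vec{h_0}=H\vec{\mub}\in K$, and deletes the constraints with $\vec{h_i}\notin L$ by the shift $\vec{\lambda}\mapsto\vec{\lambda}+t\vec{\lambda_0}$, $t\to\infty$. Your additional remark that pairing $\vec{h_0}+H\vec{\mub}=\vec{0}$ with $\vec{d^{*}}$ forces $\mub_i=0$ for $i\notin\mathcal{I}$, so that the refined Slater certificate survives the deletion, is correct and is a point the paper leaves implicit.

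Stage~2, however, contains a genuine gap --- precisely the one you flag in your closing paragraph and then do not close. Your iterated facial reduction restricts $Y$ to a proper face of $\mathbb{S}_p^+$ whenever the certificate $\vec{\nu}$ has $\sum_i\nu_iR_i\neq 0$; but the reduced problem in the statement of Lemma~\ref{lem:bipositiflemma} keeps the variable $Y\in\mathbb{S}_p^+$ and the matrices $R_i$ untouched, and property $(i)$ demands a strictly feasible point with $Y'\succ0$ in $\mathbb{S}_p^+$. A reduction that confines every feasible $Y$ to a proper face can never deliver such a point, so either that branch of your iteration must be shown to be vacuous under the hypotheses of Theorem~\ref{theo:SDPSol1Ext}, or the procedure does not terminate in the required form; you prove neither. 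The paper avoids the issue by a shorter route that you do not use: the refined Slater condition on~\eqref{SDPPackingDualExt} guarantees that the primal optimum is \emph{attained} at some $(X^{*},Y^{*},\vec{\lambda^{*}})$ (Theorem~28.2 in~\cite{Roc70}). Setting $\mathcal{I}_0:=\{i:\ b_i+\langle R_i,Y^{*}\rangle+\vec{h_i}^{T}\vec{\lambda^{*}}=0\}$, one has $\langle M_i,X^{*}\rangle=0$ for $i\in\mathcal{I}_0$, so one may impose $\langle M_i,X\rangle=0$ there, write $X=UZU^{T}$ with $U$ an orthonormal basis of $\nullspace(\sum_{i\in\mathcal{I}_0}M_i)$, and drop those constraints; for the surviving indices the right-hand side is strictly positive at $(Y^{*},\vec{\lambda^{*}})$, so $(Z'=\eta_2 I,\ Y'=Y^{*}+\eta_1 I,\ \vec{\lambda'}=\vec{\lambda^{*}})$ is strictly feasible for small $\eta_1,\eta_2>0$. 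One round suffices, no theorem of the alternative is needed, and $Y$ is never restricted. I would keep your Stage~1 and replace Stage~2 by this argument.
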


\begin{proof}
In this lemma, $(i)$ and $(ii)$ are the properties that we will need to prove Theorem~\ref{theo:SDPSol1Ext}. Properties
$(iii)$ and $(iv)$ ensure that if the theorem holds for the reduced problem, then the result also
holds for the initial problem~\eqref{SDPPackingExt}.
We handle separately the cases in which the initial problem does not satisfy the property $(i)$ or $(ii)$. If both cases arise simultaneously,
we obtain the result of this lemma by applying
successively the following two reductions.

Let $(X^*,Y^*,\vec{\lambda}^*)$ be an optimal solution of Problem~\eqref{SDPPackingExt} ; the existence of a solution is
guaranteed by the (refined) Slater condition satisfied by the dual problem indeed~(see e.g.\ \cite{Roc70,Ber95}). We denote by
$\mathcal{I}_0\subset [l]$  the subset of indices for which  $b_i + \langle R_i, Y^* \rangle + \vec{h_i}^T \vec{\lambda^*}=0$
(note that we have  $b_i + \langle R_i, Y^* \rangle + \vec{h_i}^T \vec{\lambda^*} \geq0 $  for all $i$ because $M_i\succeq0$ implies
$\langle M_i, X^*\rangle\geq0$).
We define $\mathcal{I}:=[l] \setminus \mathcal{I}_0$. In Problem~\eqref{SDPPackingExt}, we can replace 
the constraint $\langle M_i, X \rangle\leq b_i + \langle R_i, Y \rangle+ \vec{h_i}^T \vec{\lambda}$ by $\langle M_i, X \rangle=0$
for all $i\in\mathcal{I}_0$ ,
since $(X^*,Y^*,\vec{\lambda^*})$ satisfies this stronger set of constraints. For a feasible positive semidefinite matrix $X$, 
this implies
$\langle \sum_{i\in\mathcal{I}_0} M_i,  X \rangle=0$, and even $\sum_{i\in\mathcal{I}_0} M_i X =0$. 
Therefore, $X$ is of the form $U Z U^T$ for some positive semidefinite matrix $Z$, where the columns of $U$ form
an orthonormal basis of the nullspace of $M_0:=\sum_{i\in\mathcal{I}_0} M_i$
($U$ is obtained by taking the eigenvectors corresponding to the
vanishing eigenvalues of $M_0$). Hence, Problem~\eqref{SDPPackingExt} is equivalent to:
\begin{align}
 \max &\quad \langle U^T C U ,Z \rangle + \langle R_0, Y \rangle +\vec{h_0}^T\vec{\lambda} \label{SDPPackingExtbi}\\
 \textrm{s.t.} &\quad \langle U^T M_i U ,Z \rangle \leq b_i+ \langle R_i, Y\rangle + \vec{h_i}^T \vec{\lambda}, \qquad i \in \mathcal{I},\nonumber\\
&\quad Z \succeq 0,\ Y\succeq0. \nonumber
\end{align}
We have thus reduced the problem to one for which
$b_i+ \langle R_i, Y^*\rangle +\vec{h_i}^T \vec{\lambda^*}>0$ for all $i$, and strict feasibility follows
(i.e.\ property $(i)$ holds, consider
$\vec{\lambda'}=\vec{\lambda^*}, Y'=Y^*+\eta_1 I $, and $Z'=\eta_2 I$ for sufficiently small reals $\eta_1>0$ and $\eta_2>0$).
Moreover, the projected matrix $C':=U^T C U$ in the objective function
has a smaller rank than $C$ (i.e.\ $(iii)$ holds). Finally, $(iv)$ holds for the reduced problem by construction:
if $(Z,Y,\vec{\lambda})$ is a solution of Problem~\eqref{SDPPackingExtbi}, then $(X:=U Z U^T,Y,\vec{\lambda})$
is a solution of Problem~\eqref{SDPPackingExt}, both problems have the same optimal value, and of course $\rank\ X \leq \rank\ Z$.

We now handle the second case, in which Property $(ii)$ does not hold for Problem~\eqref{SDPPackingExt}.
The set $K=\{\ [\vec{h_0},H]\vec{v},\  \vec{v}\in\R^{l+1}, \vec{v}\geq\vec{0}\}$ is a closed convex cone. Hence, it is known that it can be decomposed
as $K=L+Q$, where $L$ is a vector space and $Q\subset L^\bot$ is a closed convex pointed cone
($L=K\cap (-K)$ is the \emph{lineality space} of $K$).
The interior of the dual cone $Q^*$ is therefore nonempty, i.e.\ $\exists \vec{\lambda}:\forall \vec{q} \in Q\setminus\{\vec{0}\}, \vec{\lambda}^T\vec{q}>0.$
Let $\vec{\lambda_0}$ be the orthogonal projection of $\vec{\lambda}$ on $L^\bot$, so that
$\vec{\lambda_0}^T\vec{q}=\vec{\lambda}^T\vec{q}>0$ for all $\vec{q}\in Q\setminus\{\vec{0}\}$, and $\vec{\lambda_0}^T \vec{x}=0$ for all $\vec{x} \in L$.
Now, we define the set of indices $\mathcal{I}=\{i\in[l]: \vec{h_i} \in L\}$, and its complement $\mathcal{I}_0=[l]\setminus \mathcal{I}$.
For all $i\in\mathcal{I}_0$, $\vec{h_i}=\vec{x_i}+\vec{q_i}$ for a vector $\vec{x_i}\in L$ and a  vector $\vec{q_i}\in Q\setminus\{\vec{0}\}$,
so that $\vec{\lambda_0}^T\vec{h_i}=\vec{\lambda_0}^T\vec{x_i}+\vec{\lambda_0}^T\vec{q_i}=\vec{\lambda_0}^T\vec{q_i}>0$.
For the indices $i\in\mathcal{I}$, it is clear that $\vec{\lambda_0}^T\vec{h_i}=0$. Finally, since $\vec{h_0}+H\vec{\mub}=0$, we have
$-\vec{h_0} \in K$, so that $\vec{h_0}\in L$ and $\vec{h_0}^T \vec{\lambda}=0.$
To sum up, we have proved the existence of a vector $\vec{\lambda_0}$ for which
$$\forall i\in \{0\}\cup \mathcal{I},\ \vec{\lambda_0}^T\vec{h_i}=0\quad \textrm{and}\quad
\forall i\in \mathcal{I}_0, \vec{\lambda_0}^T\vec{h_i}>0.$$ 
Let $(X^*,Y^*,\vec{\lambda}^*)$ be an optimal solution of Problem~\eqref{SDPPackingExt}. 
For all positive real $t$, $(X^*,Y^*,\vec{\lambda}^*+t\vec{\lambda_0})$ is also a solution, because it is feasible and has the same
objective value. Letting $t\to\infty$, we see that the constraints of the problem that are indexed by $i\in\mathcal{I}_0$ may be removed
without changing the optimum. We have thus reduced the problem to one for which $(ii)$ holds.

\end{proof}

We can now prove Theorem~\ref{theo:SDPSol1Ext}.
The proof mimics that of Theorem~\ref{theo:SDPSol1}, i.e. we first show that the result holds
when each $M_i$ is positive definite, and the general result is
obtained by continuity. The only difference is how we show that we can choose optimal variables
$(X^\varepsilon, Y^\varepsilon,\vec{\lambda^\varepsilon},\vec{\mu^\varepsilon})_{\varepsilon \in ]0,\varepsilon_0]}$
for a perturbed problem within a bounded region.

\begin{proof}[Proof of Theorem~\ref{theo:SDPSol1Ext}]
\sloppypar{By Lemma~\ref{lem:bipositiflemma}, we may assume without loss of generality that
$K=\operatorname{cone}\{\vec{h_0},\ldots,\vec{h_l}\} \supset -K$ and
that the primal problem is strictly feasible.}
The strict feasibility of the primal problem ensures that strong
duality holds, i.e. the optimal value of~\eqref{SDPPackingExt} equals the optimal value
of~\eqref{SDPPackingDualExt}, and the optimum is attained in the dual problem. Moreover, the
(refined) Slater constraints qualification for the dual problem guarantees the existence
of primal optimal variables as well~(see e.g.\ Theorem~28.2 in~\cite{Roc70}).
The pairs of primal and dual solutions $\big((X^*,Y^*,\vec{\lambda^*}),\vec{\mu}^*\big)$ are characterized
by the Karush-Kuhn-Tucker (KKT) conditions:
\begin{align*}
 \textrm{Primal Feasibility:} &\qquad \forall i \in [l],\quad \langle M_i, X^* \rangle \leq b_i + \langle R_i, Y^* \rangle + \vec{h_i}^T \vec{\lambda^*} ,\\
 & \qquad X^*\succeq0,\ Y^*\succeq 0;\\
 \textrm{Dual Feasibility:} &\qquad \vec{\mu^*}\geq0,\quad  \sum_{i=1}^l \mu_i^* M_i  \succeq C,\quad
   R_0+\sum_{i=1}^l\mu_i^* R_i \preceq 0,\quad \vec{h_0}+H\vec{\mu^*}=0;\\
 \textrm{Complementary Slackness:} &\qquad (\sum_{i=1}^l \mu_i^* M_i - C)\ X^*=0, \qquad ( R_0 + \sum_{i=1}^l \mu_i^* R_i)\ Y^*=0, \\
&\qquad \forall i\in[l],\ \mu_i^*(b_i+\langle R_i, Y^* \rangle + \vec{h_i}^T \vec{\lambda^*} -\langle M_i, X^*\rangle)=0.
 \end{align*}
 Now, we consider the case in which $M_i\succ0$ for all $i$, and we choose an arbitrary pair of primal and dual optimal solutions
 $\big((X^*,Y^*,\vec{\lambda^*}),\vec{\mu}^*\big)$. The dual feasibility relation implies $\vec{\mu}^*\neq \vec{0}$, and so
 $\sum_i \mu_i^* M_i$ is a positive definite matrix (we exclude the trivial case $C=0$). Since $C$ is of rank $r$,
 we deduce that
 $$\rank(\sum_i \mu_i^* M_i - C) \geq n-r.$$
 Finally, the complementary slackness relation indicates that the columns of $X^*$ belong to the nullspace of $(\sum_i \mu_i^* M_i-C)$, which is a vector space of dimension at most $n-(n-r)=r$,
 and so we conclude that $\operatorname{rank} X^* \leq r.$\par
\vspace{12pt}

We now turn to the study of the general case in which $M_i\succeq0$.
 To this end, we consider the perturbed problems
\begin{align}
  \max &\quad \langle C,X \rangle + \langle R_0,Y \rangle +\vec{h_0}^T\vec{\lambda} \nonumber \\
\textrm{s.t.} &\quad \langle M_i + \varepsilon I,X \rangle \leq b_i + \langle R_i,Y\rangle
       +\vec{h_i}^T\vec{\lambda} \qquad i \in [l], \label{PCepsilon} \tag{$P^\varepsilon_{\scriptscriptstyle{\textrm{CMB}}}$} \\
&\quad X \succeq 0,\ Y \succeq 0,\nonumber
\end{align}
and
\begin{align}
 \min_{\vec{\mu}\geq0} &\quad \sum_{i=1}^l \mu_i b_i, \nonumber\\
 \textrm{s.t.} &\quad  \sum_{i=1}^l \mu_i (M_i+\varepsilon I) \succeq C, \label{DCepsilon} \tag{$D^\varepsilon_{\scriptscriptstyle{\textrm{CMB}}}$}\\
 &\quad R_0 + \sum_{i=1}^l \mu_i R_i \preceq 0,\nonumber\\
 &\quad \vec{h_0}+H\vec{\mu}=\vec{0}. \nonumber
\end{align}
where $\varepsilon\geq0$. Note that the refined Slater constraints qualification for the unperturbed problems~\eqref{SDPPackingExt}
and~\eqref{SDPPackingDualExt} (i.e.\ simultaneous feasibility (resp.\ strict feasibility) of all the
affine constraints  (resp.\ non-affine constraints)) implies the qualification of the constraints
for~\eqref{PCepsilon} and~\eqref{DCepsilon} on a neighborhood $\varepsilon \in [0,\varepsilon_0]$,
$\varepsilon_0>0$. We denote by $\big((X^\varepsilon,Y^\varepsilon,\vec{\lambda^\varepsilon}),\vec{\mu^\varepsilon}\big)$
a pair of primal and dual
solutions of~\eqref{PCepsilon}--\eqref{DCepsilon}.
If $\varepsilon>0$,  $M_i + \varepsilon I \succ 0$ and it follows
from the previous discussion that $X^\varepsilon$ is of rank at most $r$.
We show below that we can choose the optimal variables
$(X^\varepsilon, Y^\varepsilon,\vec{\lambda^\varepsilon},\vec{\mu^\varepsilon})_{\varepsilon \in ]0,\varepsilon_0]}$
within a bounded region, so that we can construct a converging
subsequence $(X^{\varepsilon_k}, Y^{\varepsilon_k}, \vec{\lambda^{\varepsilon_k}},\vec{\mu^{\varepsilon_k}})_{k \in \mathbb{N}},\ \varepsilon_k\to0$
from these variables.
To conclude, we will see that the limit $(X^0,Y^0,\vec{\lambda^0},\vec{\mu^0})$ satisfies the
KKT conditions for Problems~\eqref{SDPPackingExt}--\eqref{SDPPackingDualExt},
and that $X^{0}$ is of rank at most~$r$.

Let us denote the optimal value of Problems~\eqref{PCepsilon}--\eqref{DCepsilon} by $OPT(\varepsilon).$ Since the constraints
of the primal problem becomes tighter when $\varepsilon$ grows, it is clear that $OPT(\varepsilon)$ is nonincreasing with respect to $\varepsilon$,
so that $$\forall \varepsilon \in [0,\varepsilon_0],\  OPT(\varepsilon_0)\leq OPT(\varepsilon)\leq OPT(0).$$
Now let $\varepsilon\in ]0,\varepsilon_0]$. By assumption, there exists a vector $\vec{\mub}\geq\vec{0}$ such that
\begin{equation}
 \sum_i \mub_i (M_i+\varepsilon I) \succeq \sum_i \mub_i M_i \succ C,\quad \textrm{and}\quad R_0+\sum_i \mub_i R_0 \prec 0. \label{sfdual}
\end{equation}
Therefore, we have
\begin{align*}
 OPT(\varepsilon) =\langle C, X^\varepsilon \rangle +  \langle R_0, Y^\varepsilon \rangle + \vec{h_0}^T \vec{\lambda^\varepsilon}
&\leq \big\langle \sum_i \mub_i (M_i+\varepsilon I), X^\varepsilon \big\rangle + \langle R_0, Y^\varepsilon \rangle
  + \vec{h_0}^T \vec{\lambda^\varepsilon}\\
&\leq \sum_i \mub_i \big(b_i+\langle R_i, Y^\varepsilon \rangle + \vec{h_i}^T \vec{\lambda^\varepsilon}\big) + \langle R_0, Y^\varepsilon \rangle
+ \vec{h_0}^T \vec{\lambda^\varepsilon}\\
&= \vec{\mub}^T \vec{b} + \langle \sum_i \mub_i R_i +R_0, Y^\varepsilon \rangle 
+ ( \underbrace{\vec{h_0}+H \vec{\mub}}_{=\vec{0}})^T\vec{\lambda^\varepsilon},
\end{align*}
where the first inequality follows from~\eqref{sfdual}, and the second one from the feasibility condition 
$\langle M_i+\varepsilon I , X^\varepsilon \rangle \leq b_i+\langle R_i, Y^\varepsilon \rangle + \vec{h_i}^T \vec{\lambda^\varepsilon}$.
The assumption~\eqref{sfdual}
moreover implies that $-(\sum_i \mub_i R_i +R_0)$ is positive definite, so that its smallest eigenvalue $\lambda'$ is positive, and
$$\lambda'\ \trace\  Y^\varepsilon \leq \big\langle -(\sum_i \mub_i R_i +R_0), Y^\varepsilon \big\rangle
       \leq  \vec{\mub}^T \vec{b}- OPT(\varepsilon)\leq  \vec{\mub}^T \vec{b}- OPT(\varepsilon_0).$$
This shows that the trace of $Y^\varepsilon$ is bounded, and so $Y^\varepsilon\succeq0$ is bounded. 

Similarly, to bound $X^\varepsilon$, we write:
\begin{align*}
\langle \sum_i \mub_i M_i -C, X^\varepsilon \big\rangle
& \leq
\langle \sum_i \mub_i (M_i+\varepsilon I) -C, X^\varepsilon \big\rangle\\
 &=\langle \sum_i \mub_i (M_i+\varepsilon I) , X^\varepsilon \big\rangle   
-OPT(\varepsilon)+\langle R_0, Y^\varepsilon \rangle +\vec{h_0}^T \vec{\lambda^\varepsilon} \\
& \leq  \sum_i \mub_i \big(b_i+\langle R_i, Y^\varepsilon \rangle + \vec{h_i}^T \vec{\lambda^\varepsilon}\big) 
-OPT(\varepsilon)+\langle R_0, Y^\varepsilon \rangle +\vec{h_0}^T \vec{\lambda^\varepsilon} \\
& = \vec{\mub}^T \vec{b} -OPT(\varepsilon) +\underbrace{\langle \sum_i \mub_i R_i +R_0, Y^\varepsilon \rangle}_{\leq 0} +  
 ( \underbrace{\vec{h_0}+H \vec{\mub}}_{=\vec{0}})^T\vec{\lambda^\varepsilon},
\end{align*}
where the first equality comes from the expression of $OPT(\varepsilon)$. The matrix $\sum_i\mub_i M_i -C$ is positive definite
and its smallest eigenvalue $\lambda''$ is therefore positive. Hence,
$$\lambda''\ \trace\ X^\varepsilon \leq  \vec{\mub}^T \vec{b} -OPT(\varepsilon) \leq  \vec{\mub}^T \vec{b}- OPT(\varepsilon_0),$$
and this shows that the matrix $X^\varepsilon\succeq 0$ is bounded.

Now, note that the feasibility of $\vec{\lambda^\varepsilon}$ implies that the quantity
$b_i + \langle R_i,Y^\varepsilon \rangle +\vec{h_i}^T\vec{\lambda^\varepsilon}$ is nonnegative for all $i\in[l]$.
Since $Y^\varepsilon$ is bounded, we deduce the existence of a lower bound $m_i\in\R$ such that
$\vec{h_i}^T\vec{\lambda^\varepsilon}\geq m_i$ ($\forall i\in [l]$). Similarly, since
$\vec{h_0}^T\vec{\lambda^\varepsilon}\geq OPT(\varepsilon_0) - \langle C, X^\varepsilon \rangle - \langle R_0, Y^\varepsilon \rangle$,
there is a scalar $m_0$ such that $\vec{h_0}^T\vec{\lambda^\varepsilon}\geq m_0$.
We now use the fact that every vector ($-\vec{h_i}$) may be written as a positive combination of the $\vec{h_k},$ $(k\in\{0\}\cup[l])$,
and we obtain that the quantities $\vec{h_i}^T\vec{\lambda^\varepsilon}$ are also bounded from above.
Let us denote by $H_0$ the matrix $[\vec{h_0},H]$; we have just proved that the vector $H_0^T \vec{\lambda^\varepsilon}$ is bounded:
$$\exists \overline{m}\in\R:\ \quad \Vert H_0^T \vec{\lambda^\varepsilon} \Vert_2 \leq \overline{m}$$
(the latter bound does not depend on $\varepsilon$). Note that one may assume without loss of generality that
$\vec{\lambda^\varepsilon}\in \range H_0$ (otherwise we consider the projection $\vec{\lambda_P^\varepsilon}$ of
$\vec{\lambda^\varepsilon} \ \textrm{on}\ \range H_0$ which is also a solution since $H_0^T\vec{\lambda^\varepsilon}=H_0^T\vec{\lambda_P^\varepsilon}.$
We know from the Courant-Fisher theorem that
the smallest positive eigenvalue of $H_0 H_0^T$ satisfies:
$$\lambdamin^>(H_0 H_0^T)=\min_{\vec{v}\in \range H_0 \setminus \{\vec{0}\}}\
 \frac{\vec{v}^T H_0 H_0^T \vec{v}}{\vec{v}^T \vec{v}}.$$
Therefore, since we have assumed $\vec{\lambda^\varepsilon}\in \range H_0$:
$$\Vert\vec{\lambda^\varepsilon}\Vert^2 \leq \frac{\Vert H_0^T \vec{\lambda^\varepsilon}\Vert^2}{\lambdamin^>(H_0 H_0^T)}
\leq \frac{\overline{m}^2}{\lambdamin^>(H_0 H_0^T)}.$$

It remains to show that the dual optimal variable $\vec{\mu^\varepsilon}$ is bounded. Our strict primal feasibility
assumption (which does not entail generality thanks to Lemma~\ref{lem:bipositiflemma})
ensures the existence of a matrix $\Yb\succ0$ and a vector $\vec{\lb}$ such that
$$\forall i\in[l],\ \langle R_i, \Yb \rangle +b_i +\vec{h_i}^T\vec{\lb} =\eta_i >0.$$
By dual feasibility, $R_0+\sum_i \mu_i^\varepsilon R_i$ is a negative semidefinite matrix, and we have:
$$0\geq \langle R_0, \Yb \rangle +\sum_{i=1}^l \mu_i^\varepsilon \langle R_i, \Yb \rangle
=\langle R_0, \Yb \rangle +\sum_{i=1}^l \mu_i^\varepsilon (\eta_i - b_i-\vec{h_i}^T\vec{\lb}).$$
Hence, we have the following inequalities:
\begin{align*}
\forall k \in[l],\ \eta_k  \mu_k^\varepsilon \leq \sum_{i=1}^l  \eta_i  \mu_i^\varepsilon &\leq
  \vec{b}^T\vec{\mu^\varepsilon} + \vec{\lb}^T H \vec{\mu^\varepsilon} - \langle R_0, \Yb \rangle\\
&= OPT(\varepsilon) -  \vec{\lb}^T \vec{h_0} - \langle R_0, \Yb \rangle\\
& \leq OPT(0)-  \vec{\lb}^T \vec{h_0} - \langle R_0, \Yb \rangle,
\end{align*}
and we have shown that $\vec{\mu^\varepsilon}\geq \vec{0}$ is bounded.

\sloppypar{~\\ \indent We can therefore construct a
sequence of pairs of primal and dual optimal solutions
$(X^{\varepsilon_k}, Y^{\varepsilon_k},\vec{\lambda^{\varepsilon_k}}, \vec{\mu^{\varepsilon_k}})_{k \in \mathbb{N}}$
that converges, with $\varepsilon_k \underset{k \to \infty}{\longrightarrow} 0$, $\varepsilon_k>0$.}
In this sequence, the limit $X^0$ of $X^{\varepsilon_k}$ is of rank at most $r$, because the rank is a lower semicontinuous function and $\rank\ X^{\varepsilon_k}\leq r$
for all $k\in\mathbb{N}$.
It remains to show that $(X^{0}, Y^{0}, \vec{\lambda^0})$ is a solution of Problem~\eqref{SDPPackingExt}. The $\varepsilon-$perturbed KKT conditions must
hold for all $k\in\mathbb{N}$, and so they hold for the pair $\big((X^{0}, Y^{0}, \vec{\lambda^0}), \vec{\mu^{0}}\big)$
by taking the limit (this works because $\mathbb{S}_n^+$ is closed).
This concludes the proof of the existence of a solution in which $\rank\ X\leq r$.
\vspace{12pt}

It remains to show the second statement of this theorem, namely that if
$C\neq 0$ and $\overline{r}:=\displaystyle{\min_{i\in[l]}}\ \rank\ M_i$, then
the rank of $X$ is bounded by $n-\overline{r}+r$ for any solution $(X,Y,\vec{\lambda})$ of~\eqref{SDPPackingExt}.

Let $(X^*,Y^*,\vec{\lambda^*})$ be a solution of Problem~\eqref{SDPPackingExt}. 
If the primal problem is strictly feasible, then there exists a Lagrange multiplier $\vec{\mu^*}\geq\vec{0}$
such that the KKT conditions described at the beginning of this proof are satisfied. Since
$C\neq0$, we have $\vec{\mu^*}\neq\vec{0}$, and we can write:
$$\rank\ (\sum_{i\in[l]} \mu_i^* M_i -C)\geq \overline{r}-r.$$
Hence, since by complementary slackness, $X^*$ belongs to the nullspace of
$(\sum_{i\in[l]} \mu_i^* M_i -C)$, we find $\rank\ X^*\leq n-\overline{r}+r$.

If the primal problem is not strictly feasible, there must be an index $i\in[l]$ such
that $\langle M_i, X^*\rangle=0$ (otherwise, $(\eta_1I,Y^*+\eta_2 I,\vec{\lambda^*})$ would be
strictly feasible for sufficiently small positive reals $\eta_1$ and $\eta_2$). Therefore,
$X^*$ is in the nullspace of a matrix of rank larger than $\overline{r}$, and
$\rank\ X^*\leq n-\overline{r}\leq n-\overline{r}+r$.
\end{proof}

\section{Proof of Theorem~\ref{theo:SDPSol1Asy}}

We assume that Problems~\eqref{SDPPackingExt} and~\eqref{SDPPackingDualExt}
are feasible, and for $\eta \geq 0$ we consider the following pair of primal and dual perturbed problems.
\begin{align}
  \sup &\quad \langle C,X \rangle + \langle R_0,Y \rangle +\vec{h_0}^T\vec{\lambda} \nonumber \\
\textrm{s.t.} &\quad \langle M_i,X \rangle \leq b_i + \langle R_i,Y\rangle
       +\vec{h_i}^T\vec{\lambda} \qquad i \in [l], \label{Peta} \tag{$P_\eta$} \\
&\quad \eta\ (\trace\ X+\trace\ Y)\leq 1, \nonumber \\
&\quad X \succeq 0,\ Y \succeq 0,\nonumber
\end{align}
and
\begin{align}
 \inf_{\vec{\mu}\geq\vec{0},\ \sigma\geq 0} &\quad \sum_{i=1}^l \mu_i b_i+\sigma, \nonumber\\
 \textrm{s.t.} &\quad  \sum_{i=1}^l \mu_i  M_i + \sigma \eta I \succeq C, \label{Deta} \tag{$D_\eta$}\\
 &\quad R_0 + \sum_{i=1}^l \mu_i R_i -\sigma \eta I \preceq 0,\nonumber\\
 &\quad \vec{h_0}+H\vec{\mu}=\vec{0}. \nonumber
\end{align}
It is clear that the feasibility of Problem~\eqref{SDPPackingExt} implies that of~\eqref{Peta} if
$\eta>0$ is sufficiently small. Let $\vec{\mub}$ be a dual feasible variable for
Problem~\eqref{SDPPackingDualExt}, and $\sigma>0$ be sufficiently large so that
$\sum_{i=1}^l \mu_i  M_i + \sigma \eta I \succ C$ and
$R_0 + \sum_{i=1}^l \mu_i R_i -\sigma \eta I \prec 0$: the refined Slater condition holds for the
perturbed problem~\eqref{Deta}. Hence, by Theorem~\ref{theo:SDPSol1Ext}, there exists a solution
$(X^\eta,Y^\eta,\vec{\lambda^\eta})$ of Problem~\eqref{Peta} in which $\rank\ X^\eta\leq r$. We will
show that $\langle C,X^\eta \rangle + \langle R_0,Y^\eta \rangle +\vec{h_0}^T\vec{\lambda^\eta}$
converges to the value of the supremum in Problem~\eqref{SDPPackingExt} as $\eta\to0^+$,
which will complete this proof.

Let $\eta_k$ be a positive sequence decreasing to $0$, and define
$\gamma_k:=\langle C,X^{\eta_k} \rangle + \langle R_0,Y^{\eta_k} \rangle +\vec{h_0}^T\vec{\lambda^{\eta_k}}$.
It is clear that $\gamma_k$ is a nondecreasing sequence, because the constraints in Problem~\eqref{Peta}
become looser as $\eta$ gets smaller, and $\gamma_k$ is bounded from above by the value of the supremum
$\gamma^*$ in Problem~\eqref{SDPPackingExt}. Therefore, $(\gamma_k)_{k \in \mathbb{N}}$ converges. 
Assume (\emph{ad absurdum}) that the limit of this sequence is $\gamma_\infty<\gamma*$. Then, there are some variables
$(X_0,Y_0,\vec{\lambda_0})$ that are feasible for~\eqref{SDPPackingExt}, and such that 
$\langle C,X_0 \rangle + \langle R_0,Y_0 \rangle +\vec{h_0}^T\vec{\lambda_0}>\gamma_\infty$.
But then, $(X_0,Y_0,\vec{\lambda_0})$ is also feasible for Problem~\eqref{Peta}, when
$\eta\leq \eta_0:=(\trace\ X_0+ \trace\ Y_0)^{-1}$. For any $k\in\mathbb{N}$ such that $\eta_k\leq \eta_0$,
this contradicts the optimality of $(X^{\eta_k},Y^{\eta_k},\vec{\lambda^{\eta_k}})$ for Problem~($P_{\eta_k}$).
Hence, $\gamma_\infty=\gamma*$ and the proof is complete.
\end{document}